\theoremstyle{plain}
\newtheorem{theorem}{Theorem}[section]
\newtheorem{proposition}[theorem]{Proposition}
\newtheorem{corollary}[theorem]{Corollary}
\newtheorem{lemma}[theorem]{Lemma}
\theoremstyle{definition}
\newtheorem{example}[theorem]{Example}
\newtheorem{remark}[theorem]{Remark}
\newtheorem{question}[theorem]{Question}
\newcommand{\integers}{\mathbb{Z}}
\newcommand{\posintegers}{\mathbb{N}}
\newcommand{\Id}{\operatorname{Id}}
\newcommand{\Nil}{\operatorname{Nil}}
\newcommand{\id}{\operatorname{id}}
\newcommand{\krogec}{\circ}
\title{Rings in which nilpotents form a subring}
\date{October 2015}
\author{Janez \v{S}ter\\Faculty of Mechanical Engineering\\University of Ljubljana, Slovenia\\{\small e-mail: janez.ster@fs.uni-lj.si}}
\begin{document}

\maketitle
\footnotetext{2010 Mathematics Subject Classification: 16N40, 16U99.
Key words: nilpotent, NR ring, Armendariz ring, exchange ring, strongly nil clean ring}

\begin{abstract}
Let $R$ be a ring with the set of nilpotents $\Nil(R)$.
We prove that the following are equivalent:
(i) $\Nil(R)$ is additively closed,
(ii) $\Nil(R)$ is multiplicatively closed and $R$ satisfies K\"othe's conjecture,
(iii) $\Nil(R)$ is closed under the operation $x\krogec y=x+y-xy$,
(iv) $\Nil(R)$ is a subring of $R$.
Some applications and examples of rings with this property are given, with
an emphasis on certain classes of exchange and clean rings.
\end{abstract}

\section{Introduction}

Rings in which nilpotents form a subring (we will call these rings \textit{NR rings} hereafter)
are closely related to Armendariz rings and their variations.
A ring $R$ is called \textit{Armendariz} if,
given any two polynomials
$f(x)=a_0+a_1x+\ldots+a_mx^m$ and
$g(x)=b_0+b_1x+\ldots+b_nx^n$ over $R$,
$f(x)g(x)=0$ implies $a_ib_j=0$ for all $i$ and $j$.
Antoine \cite{antoine} studied the structure
of the set of nilpotents in Armendariz rings and proved that in these rings nilpotents
always form a subring. He also proved in \cite{antoine} that the same is true under
a slightly weaker condition that the ring is \textit{nil-Armendariz}.
Some other results relating the Armendariz and NR conditions can be also found
in \cite{chunjeonkangleelee}, and recently \cite{chen2}.

In this paper we prove some results which concern the question
of when the set of nilpotents
in a ring is a subring in general, not in connection with any of the above mentioned
Armendariz conditions.
Roughly speaking, our main theorem shows that
the set of nilpotents $\Nil(R)$ of a ring $R$ is a subring
whenever $R$ satisfies K\"othe's conjecture (which is a weak assumption) and
$\Nil(R)$ is closed under \textit{any} of the most commonly used operations in rings,
such as addition, multiplication, the ``circle''
operation $x\krogec y=x+y-xy$, the commutator operation $(x,y)\mapsto xy-yx$, or even some more
(see Theorem \ref{glavni} and Remark \ref{glavnaopomba}).
Therefore, very little needs to be assumed for the set of nilpotents to be a subring.

Let us provide here some notations and conventions used throughout the paper.
All rings in the paper are associative and not necessarily unital, unless otherwise stated.
For a ring $R$ and for any two elements $x,y\in R$, we denote $x\krogec y=x+y-xy$.
Then $(R,\krogec)$ is a monoid. We denote by $Q(R)$ its group of units, i.e.
$$Q(R)=\{q\in R|\textnormal{ there exists $r\in R$ such that $q\krogec r=r\krogec q=0$}\}.$$
If $R$ is unital, then the group of units of $R$,
denoted $U(R)$, is precisely $U(R)=1-Q(R)$.

We denote by $J(R)$, $\Nil^*(R)$, and $\Nil(R)$ the Jacobson radical,
the upper nilradical, and the set of nilpotents of a ring $R$,
respectively. Note that $\Nil^*(R)\subseteq J(R)\subseteq Q(R)$
and $\Nil^*(R)\subseteq\Nil(R)\subseteq Q(R)$.
The ring $R$ is said to be of \textit{bounded index}
if there exists a positive integer $n$ such that $x^n=0$ for all $x\in\Nil(R)$,
and $R$ is of bounded index $n$ if $n$ is the least integer with this property.
We call $R$ a \textit{NI} ring if $\Nil(R)$ is an ideal in $R$,
and a \textit{NR} ring if $\Nil(R)$ is a subring of $R$ \cite{chunjeonkangleelee}.

Our main result will be applied to certain classes of exchange rings.
For a ring $R$, we denote by $\Id(R)$ its set of idempotents.
We say that $R$ is an \textit{exchange}
ring if for every $a\in R$ there exists $e\in\Id(R)$
such that $e=ra=s\krogec a$ for some $r,s\in R$ \cite{ara}.
If $R$ has a unit, this is equivalent to saying that for every $a\in R$
there exists $e\in\Id(R)$ such that $e\in Ra$ and $1-e\in R(1-a)$ (see \cite{ara}).

Throughout the text, $\integers$ will denote the set of integers
and $\posintegers$ the set of positive integers. $R[x]$ and
$M_n(R)$ stand for the polynomial ring and the ring of $n\times n$ matrices over $R$,
respectively.

\section{The Main Result}

In this section we prove our main theorem.
For every nilpotent $x\in\Nil(R)$, we define the \textit{index} of $x$ as
the smallest positive integer $n$ such that $x^n=0$.

\begin{lemma}
\label{redi2plus}
In any ring $R$,
if $x,y\in\Nil(R)$ are of index $2$
and $x+y\in\Nil(R)$, then $xy\in\Nil(R)$.
\end{lemma}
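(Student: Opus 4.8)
The plan is to reduce the claim to a statement about the two products $a=xy$ and $b=yx$, and to exploit that squaring $x+y$ kills the square terms. First I would note that, since $x^2=y^2=0$,
$$(x+y)^2 = x^2+xy+yx+y^2 = xy+yx = a+b,$$
so $a+b$ is nilpotent because $x+y$ is. The crucial observation is that $ab$ and $ba$ vanish: $ab = xy\cdot yx = x\,y^2\,x = 0$ and $ba = yx\cdot xy = y\,x^2\,y = 0$. Thus $a$ and $b$ not only commute, but annihilate each other in either order.

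From $ab=ba=0$ it follows by a trivial induction that $(a+b)^k = a^k+b^k$ for all $k\ge 1$: in the expansion of $(a+b)^k$, any monomial other than $a^k$ or $b^k$ is a word in $a,b$ having two consecutive distinct letters, hence contains a factor $ab$ or $ba$ and is $0$. Choosing $m$ with $(a+b)^m=0$ we get $a^m=-b^m$, and multiplying on the left by $a$ yields $a^{m+1} = -a\,b^m = -(ab)\,b^{m-1} = 0$. Hence $xy=a\in\Nil(R)$, as claimed.

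I do not expect a genuine obstacle here; the only point requiring care is the bookkeeping that makes the binomial expansion collapse, i.e.\ spotting that $y^2=0$ forces $xy\cdot yx=0$ (and symmetrically $x^2=0$ forces $yx\cdot xy=0$). Everything after that is formal. One could alternatively run the last step through the identity $(xy)^{m+1}=x(yx)^m y$ together with the fact that $(yx)^m$ lies in the two-sided annihilator of $a+b$, but the direct computation above seems shortest.
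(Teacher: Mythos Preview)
Your proof is correct and follows essentially the same route as the paper's: compute $(x+y)^2=xy+yx$, use $x^2=y^2=0$ to see that $(xy+yx)^k=(xy)^k+(yx)^k$, and then conclude $(xy)^{k+1}=0$ from $(xy)^k+(yx)^k=0$. You have simply made explicit the reason the binomial expansion collapses (namely $ab=ba=0$ for $a=xy$, $b=yx$), which the paper leaves implicit.
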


\begin{proof}
Let $x,y$ be elements with the given properties.
Since $x+y\in\Nil(R)$, $xy+yx=(x+y)^2\in\Nil(R)$.
We have $(xy+yx)^k=(xy)^k+(yx)^k$ for every $k$,
hence $(xy)^k+(yx)^k=0$ for some $k$, which gives $(xy)^{k+1}=0$.
\end{proof}

\begin{lemma}
\label{redi2krogec}
In any ring $R$,
if $x,y\in\Nil(R)$ are of index $2$
and $x\krogec y\in\Nil(R)$, then $xy\in\Nil(R)$.
\end{lemma}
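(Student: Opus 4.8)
The plan is to prove that some power of $xy$ vanishes, by showing inductively that high powers of $xy$ live inside high powers of the nilpotent element $n:=x\krogec y=x+y-xy$. Fix $m$ with $n^m=0$; if $m=1$ then $n=0$ and $xy=xn=0$ already, so I would assume $m\ge 2$.

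The bookkeeping rests on a few identities. From $x^2=y^2=0$ one gets at once
\[
xn=x(x+y-xy)=xy,\qquad ny=(x+y-xy)y=xy ,
\]
hence $(xy)^2=(xn)(ny)=xn^2y$, which will be the base case. The one identity that genuinely uses the hypothesis is an expression for the commutator $yx-xy$. Since $x^2=y^2=0$, the element $-x$ is the $\krogec$-inverse of $x$ in the monoid $(R,\krogec)$, and likewise $-y$ for $y$; consequently $(-y)\krogec(-x)$ is the $\krogec$-inverse of $x\krogec y=n$. On the other hand $n^m=0$ makes $-(n+n^2+\dots+n^{m-1})$ a $\krogec$-inverse of $n$ as well, so by uniqueness of inverses in a monoid the two coincide, and expanding $(-y)\krogec(-x)=-x-y+yx$ yields
\[
yx-xy=-(n^2+n^3+\dots+n^{m-1})=:\delta ,
\]
a polynomial in $n$ all of whose terms have degree $\ge 2$. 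In particular $yxy=(yx)y=(xy+\delta)y=xy^2+\delta y=\delta y$.

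Then I would run the induction. Suppose $(xy)^k$ is a $\integers$-linear combination of elements $xn^jy$ with every exponent $j\ge 2k-2$ (the base case $k=2$ being $(xy)^2=xn^2y$). Multiplying on the right by $xy$ and using $(xn^jy)(xy)=xn^j(yxy)=xn^j\delta y$, together with the observation that $n^j\delta$ is a $\integers$-combination of powers $n^{j'}$ with $j'\ge j+2$, shows that $(xy)^{k+1}$ is again a combination of elements $xn^{j'}y$, now with every $j'\ge 2(k+1)-2$. Taking $k=m$, so that $2m-2\ge m$, forces every occurring power of $n$ to be $0$; hence $(xy)^m=0$ and $xy\in\Nil(R)$.

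The step I expect to be the crux is pinning down $yx-xy$ as a polynomial in $n$ divisible by $n^2$: this is precisely where nilpotence of $x\krogec y$ — as opposed to nilpotence of $xy$, which is what we are trying to establish — enters, and without it the chain of exponent increases need not terminate. One might instead hope to deduce the statement from Lemma~\ref{redi2plus} after showing $x+y\in\Nil(R)$, but for index-$2$ elements the identity $(x+y)^{2k}=(xy)^k+(yx)^k$ combined with that lemma makes "$x+y\in\Nil(R)$" equivalent to the desired conclusion, so a self-contained computation of the kind above seems to be needed.
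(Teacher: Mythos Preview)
Your argument works, but there is a sign slip: $(-y)\krogec(-x)=-y-x-(-y)(-x)=-x-y-yx$, not $-x-y+yx$. Equating this with the $\krogec$-inverse $-(n+n^2+\dots+n^{m-1})$ of $n$ therefore yields
\[
xy+yx=n^2+n^3+\dots+n^{m-1},
\]
so it is the \emph{anti}commutator, not the commutator, that is a polynomial in $n$ of degree $\ge 2$. This does not damage your induction: from $yx=-xy+p$ with $p=n^2+\dots+n^{m-1}$ one still gets $yxy=(-xy+p)y=py$, and the exponent bookkeeping goes through verbatim with $p$ in place of your $\delta$.

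The paper obtains the same key identity more directly, by checking that $n^2=(xy+yx)(1-n)$ and inverting the unit $1-n$, and then finishes in one line: since $(xy+yx)^k=(xy)^k+(yx)^k$ (every cross term contains an $x^2$ or a $y^2$), nilpotence of $xy+yx$ gives $(xy)^k+(yx)^k=0$ for some $k$, hence $(xy)^{k+1}=0$. This is exactly the route you dismissed as circular in your closing paragraph: once your corrected identity is in hand, $xy+yx\in\Nil(R)$ is already proved (from $n\in\Nil(R)$, not from $xy\in\Nil(R)$), so the short endgame via $(xy+yx)^k=(xy)^k+(yx)^k$ is available. Your inductive argument is a legitimate alternative and even delivers the explicit bound $(xy)^m=0$, but it does more work than necessary.
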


\begin{proof}
Let $x,y$ be elements with the given properties.
First note that we can embed $R$ into a unital ring,
so that we may assume that $R$ is already unital.

Let $z=x\krogec y\in\Nil(R)$ and $u=1-z\in U(R)$.
A direct verification shows that $z^2=(xy+yx)u$.
Since $z$ and $u$ commute and $z$ is a nilpotent,
it follows that $xy+yx$ is a nilpotent.
Since $(xy+yx)^k=(xy)^k+(yx)^k$ for every $k$,
it follows that $(xy)^k+(yx)^k=0$ for some $k$,
which gives $(xy)^{k+1}=0$.
\end{proof}

We say that a ring $R$ satisfies
\textit{K\"othe's conjecture} if every nil left ideal of $R$
is contained in a nil two-sided ideal.
Many other equivalent formulations exist;
one of them, for example, is that the sum of
two nil left ideals in $R$ is a nil left ideal
(see, for example, \cite[p.~164]{lam2}).
Whether or not every ring satisfies K\"othe's
conjecture is a long-standing open question.
An important fact that will be needed in our
computations is that the rings we are dealing
with all satisfy K\"othe's conjecture.

\begin{lemma}
\label{kothe}
Let $R$ be a ring such that the set of nilpotents
$\Nil(R)$ is closed under $+$ or $\krogec$.
Then $R$ satisfies K\"othe's conjecture.
\end{lemma}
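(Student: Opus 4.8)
The plan is to use the definition of K\"othe's conjecture directly: it suffices to show that for every nil left ideal $L$ of $R$, the two-sided ideal it generates is itself nil, since then $L$ is contained in a nil two-sided ideal. I would first reduce to the unital case by embedding $R$ into a unital ring $R^1=R\oplus\integers$, exactly as in the proof of Lemma~\ref{redi2krogec}. A nilpotent of $R^1$ has zero $\integers$-component, so $\Nil(R^1)=\Nil(R)$; in particular the hypothesis (closure under $+$ or $\krogec$) passes to $R^1$, $L$ is still a nil left ideal of $R^1$, and a nil ideal of $R^1$ containing $L$ intersects $R$ in a nil ideal of $R$ containing $L$. So assume $R$ is unital and let $I=LR$ be the ideal generated by $L$.

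The common observation is that every $c\in I$ can be written as a finite sum $c=\ell_1r_1+\dots+\ell_nr_n$ with $\ell_i\in L$ and $r_i\in R$, and that each summand $\ell_ir_i$ already belongs to $\Nil(R)$: since $r_i\ell_i\in L$ is nilpotent, say $(r_i\ell_i)^m=0$, we get $(\ell_ir_i)^{m+1}=\ell_i(r_i\ell_i)^mr_i=0$. If $\Nil(R)$ is closed under $+$ we are done immediately, because $c$ is then a finite sum of nilpotents, hence nilpotent, so $I$ is nil.

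The case where $\Nil(R)$ is only closed under $\krogec$ is the main point. Here I would prove, by induction on the number $n$ of summands, that $1-c$ factors as $(1-b_1)(1-b_2)\cdots(1-b_t)$ with every $b_j\in\Nil(R)$. For the inductive step I peel off the first factor: $1-\ell_1r_1$ is invertible since $\ell_1r_1\in\Nil(R)$, with inverse $v=\sum_{k\ge0}(\ell_1r_1)^k$, and $v(1-c)=1-c'$ where $c'=\sum_{i=2}^n(v\ell_i)r_i$. The key point is that $v\ell_i\in L$, because $(\ell_1r_1)^k\ell_i\in RL\subseteq L$ for every $k$; hence $c'$ is again an element of $I$ written with only $n-1$ summands, the induction hypothesis applies to it, and $1-c=(1-\ell_1r_1)(1-c')$ has the required form. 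Finally, the identity $1-(x\krogec y)=(1-x)(1-y)$, applied repeatedly, turns the factorization $1-c=\prod_j(1-b_j)$ into $c=b_1\krogec b_2\krogec\cdots\krogec b_t$, so closure of $\Nil(R)$ under $\krogec$ gives $c\in\Nil(R)$. Thus $I$ is nil in this case as well, and $R$ satisfies K\"othe's conjecture.

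The only delicate step is the bookkeeping inside the inductive peeling --- checking that the modified coefficients $v\ell_i$ stay in $L$ --- and this is precisely where the left-ideal property $RL\subseteq L$ enters; everything else is routine computation with finite geometric sums of nilpotents.
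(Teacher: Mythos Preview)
Your argument is correct. The unitization step is fine ($\Nil(R^1)=\Nil(R)$ because the $\integers$-component of a nilpotent must vanish), each summand $\ell_ir_i$ is nilpotent for the reason you give, and the inductive ``peeling'' works: the point that $v\ell_i\in L$ follows from $RL\subseteq L$ exactly as you say, so the induction goes through and $c=b_1\krogec\cdots\krogec b_t$ with each $b_j\in\Nil(R)$.

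The paper's proof, however, is shorter because it uses a different (equivalent) formulation of K\"othe's conjecture: instead of showing that the two-sided ideal $LR$ generated by a nil left ideal $L$ is nil, it shows that the sum of two nil left ideals $I,J$ is nil. This cuts the problem down to two summands from the start: given $x\in I$, $y\in J$ with $y^n=0$, one sets $z=x+yx+\cdots+y^{n-1}x\in I$, checks $z-yz=x$, hence $x+y=y\krogec z$, and is done in one stroke with no induction. The underlying trick is the same geometric-series inversion of $1-\text{nilpotent}$ that you use, but the two-ideal formulation avoids the bookkeeping of arbitrary finite sums and the explicit passage to a unital overring. Your route has the minor advantage of producing the containing nil two-sided ideal explicitly, but at the cost of a longer argument.
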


\begin{proof}
It suffices to prove that the sum of
two nil left ideals in $R$ is a nil left ideal.
If $\Nil(R)$ is closed under $+$, this is trivial,
so let us assume that $\Nil(R)$ is closed under $\krogec$.
Let $I$ and $J$ be nil left ideals of $R$,
and take $x\in I$ and $y\in J$.
We wish to prove that $x+y$ is nilpotent.
Choose $n\in\posintegers$ such that $y^n=0$, and define
$z=x+yx+y^2x+\ldots+y^{n-1}x$.
Then $z-yz=x$, and hence $x+y=y\krogec z$.
Since $I$ is a nil left ideal, $z\in I$ is nilpotent.
Hence the conclusion follows from the fact that
$\Nil(R)$ is closed under $\krogec$.
\end{proof}

Recall that the upper radical of a ring $R$,
$\Nil^*(R)$, is the sum of all nil (two-sided) ideals.
If $R$ satisfies K\"othe's conjecture then
every nil left (or right) ideal in $R$ is contained
in $\Nil^*(R)$. In particular,
if $\Nil^*(R)=0$ and $R$ satisfies K\"othe's conjecture,
then $R$ contains no nonzero nil left ideals.

\begin{lemma}
\label{glavnalema}
Let $R$ be a ring such that $\Nil^*(R)=0$ and
$R$ satisfies K\"othe's conjecture,
and suppose that $xy\in\Nil(R)$ for all $x,y\in\Nil(R)$ with $x^2=0$.
Let $r\in\posintegers$ and $x_1,x_2,\ldots,x_r\in\Nil(R)$
with $x_1^n=0$ for some $n\in\posintegers$. Then
$$x_1^{n_1}x_2x_1^{n_2}x_3\ldots x_1^{n_{r-1}}x_rx_1^{n_r}=0$$
for all $n_i\in\posintegers$ with $n_1+\ldots+n_r\ge n$.
In particular, $\Nil(R)$ is multiplicatively closed.
\end{lemma}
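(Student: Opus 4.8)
I would first separate the elementary reductions from the real content. If $r=1$ the assertion reads $x_1^{n_1}=0$, which is true because $n_1\ge n$. If $n_1+\cdots+n_r>n$, let $j$ be least with $n_1+\cdots+n_j\ge n$; if $j=1$ the first factor already vanishes, and if $j\ge2$ then, writing $m:=n-(n_1+\cdots+n_{j-1})$ (so that $1\le m\le n_j$), the product splits as $\big(x_1^{n_1}x_2\cdots x_jx_1^{m}\big)\cdot x_1^{\,n_j-m}\cdot(\text{rest})$, where the displayed prefix is again a word of the given shape but with $x_1$-exponent sum exactly $n$. Hence it suffices to prove the identity when $n_1+\cdots+n_r=n$ (and then necessarily $r\ge2$ and each $n_i\le n-1$). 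Granting the identity, the final sentence is immediate: for $a,b\in\Nil(R)$ with $a^n=0$, apply it with $x_1=a$, $r=n$, $x_2=\cdots=x_n=b$ and all $n_i=1$ to get $(ab)^{n-1}a=0$, whence $(ab)^n=0$ and $ab\in\Nil(R)$.

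For the identity I plan to induct on $n$. The case $n=1$ is trivial ($x_1=0$), so let $n\ge2$ and assume the statement, for every ring and every nilpotent, for all exponents $<n$; write $a=x_1$. The engine of the step is that $a^{n-1}$ is square-zero, since $(a^{n-1})^2=a^{2n-2}=0$: therefore the standing hypothesis applies to it and gives $a^{n-1}y\in\Nil(R)$ (and $ya^{n-1}\in\Nil(R)$) for every $y\in\Nil(R)$, and more generally $a^{n-1}Ra^{n-1}\subseteq\Nil(R)$. On the other side I would use the fact recalled just before the lemma: since $\Nil^*(R)=0$ and $R$ satisfies K\"othe's conjecture, $R$ has no nonzero nil left or right ideals, so \emph{any} left ideal or right ideal of $R$ contained in $\Nil(R)$ is $0$.

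Combining these is where the work lies. The aim is to show that the product in question factors through a one-sided ideal that is forced into $\Nil(R)$ — for instance that the left ideal generated by $a^{n-1}xa$ lies in $\Nil(R)$ for each $x\in\Nil(R)$, which by the previous paragraph would give $a^{n-1}xa=0$ — and then to feed such vanishings, together with the induction hypothesis applied to the words of strictly smaller $x_1$-exponent that appear, back into the general word $x_1^{n_1}x_2\cdots x_rx_1^{n_r}$, steadily lowering its $x_1$-content until the induction hypothesis closes the argument. The main obstacle, and the step I expect to be genuinely delicate, is verifying the nilpotence that licenses the passage "one-sided ideal $\subseteq\Nil(R)$ implies $0$": square-zero elements multiply well only when adjacent, whereas in all products that occur the copies of $a$ are separated by the $x_i$'s, so the reduction of index cannot be carried out in one stroke and must be organised by peeling off a single power of $a$ at a time, alternately invoking $a^{n-1}\cdot\Nil(R)\subseteq\Nil(R)$ and the induction hypothesis on the shorter words so produced.
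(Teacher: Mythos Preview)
Your preliminary reductions and the derivation of the ``in particular'' clause are fine and match the paper. The gap is exactly where you flag it: the inductive step is not carried out, and your choice of induction variable makes it hard to carry out. Inducting on $n$ gives you a hypothesis about \emph{other} nilpotents of smaller index, but the word you must kill is built entirely from your fixed $a$ with $a^n=0$; there is no evident way to trade it for a word in an element of index $<n$. Your observation that $a^{n-1}$ is square-zero does yield $a^{n-1}xa=0$ (via $(ata^{n-1})^2=0\Rightarrow ata^{n-1}x\in\Nil(R)\Rightarrow ta^{n-1}xa\in\Nil(R)$), but this is only one special case ($r=2$, $n_1=n-1$, $n_2=1$), and the vague plan to ``peel off one power of $a$ at a time'' and invoke the hypothesis on shorter words is not an argument.

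The paper instead inducts on $r$, and this is the missing idea. With $z=x_1^{n_1}x_2\cdots x_{r-1}x_1^{n_{r-1}}$, the induction hypothesis (merging the last two blocks of $x_1$'s into $x_1^{n_{r-1}+n_r}$) gives $z\,x_1^{n_r}=0$ directly. Then for every $t\in R$ one has $(x_1^{n_r}tz)^2=x_1^{n_r}t(zx_1^{n_r})tz=0$, so $x_1^{n_r}tz$ is square-zero and the standing hypothesis yields $x_1^{n_r}tz\cdot x_r\in\Nil(R)$, hence $t\,zx_rx_1^{n_r}\in\Nil(R)$. Thus $zx_rx_1^{n_r}$ generates a nil left ideal, which is $0$ since $\Nil^*(R)=0$ and K\"othe holds. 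That is the whole step: one application of the square-zero hypothesis and one use of ``no nonzero nil left ideals'', with no need to reduce to $\sum n_i=n$ or to track the index of $a$.
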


\begin{proof}
We will prove the statement by induction on $r$.
If $r=1$ there is nothing to prove.
Suppose that $r\ge 2$, and let $x_1,\ldots,x_r\in\Nil(R)$ with $x_1^n=0$,
and $n_1,\ldots,n_r\in\posintegers$ with $\sum n_i\ge n$.
Write $z=x_1^{n_1}x_2x_1^{n_2}\ldots x_{r-1}x_1^{n_{r-1}}$.
By the induction hypothesis we have $zx_1^{n_r}=0$.
Choose any $t\in R$. Then $(x_1^{n_r}tz)^2=0$, so that
the hypothesis of the lemma yields
$x_1^{n_r}tzx_r\in\Nil(R)$, i.e.~$tzx_rx_1^{n_r}\in\Nil(R)$.
Therefore $zx_rx_1^{n_r}$ generates a nil left ideal in $R$.
(Note that in any ring $R$, if $x\in R$ satisfies $Rx\subseteq\Nil(R)$,
then the left ideal generated by $x$, which is $\integers x+Rx$, is also nil.)
Since $R$ contains no nonzero nil left ideals, it follows that
$zx_rx_1^{n_r}=0$, which is exactly what was to be proved.

To prove the last part of the lemma,
take arbitrary $x,y\in\Nil(R)$, with $x^n=0$.
Set $r=n$, $x_1=x$, $x_2=\ldots=x_r=y$, and $n_1=\ldots=n_r=1$.
The statement of the lemma then gives $(xy)^{n-1}x=0$,
hence $(xy)^n=0$, as desired.
\end{proof}

\begin{lemma}
\label{glavnalema2}
Let $R$ be a ring such that $\Nil^*(R)=0$ and
$R$ satisfies K\"othe's conjecture,
and suppose that $xy\in\Nil(R)$ for all $x,y\in\Nil(R)$ with $x^2=y^2=0$.
Then $\Nil(R)$ is multiplicatively closed.
\end{lemma}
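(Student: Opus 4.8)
The plan is to reduce Lemma~\ref{glavnalema2} to Lemma~\ref{glavnalema}: by the latter it suffices to verify that $xy\in\Nil(R)$ for all $x,y\in\Nil(R)$ with $x^2=0$ (with \emph{no} restriction on $y$), since Lemma~\ref{glavnalema} then immediately gives that $\Nil(R)$ is multiplicatively closed. I would prove the required statement in the sharper form
$$(\ast_n)\colon\qquad xy\in\Nil(R)\quad\text{whenever }x^2=0\text{ and }y^n=0,$$
by induction on $n$. For $n=1$ we have $y=0$ and there is nothing to prove, and $(\ast_2)$ is precisely the hypothesis of Lemma~\ref{glavnalema2}.

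So let $n\ge 3$ and assume $(\ast_{n-1})$. The key observation is that the proof of Lemma~\ref{glavnalema} uses its ring-theoretic hypothesis ``$uv\in\Nil(R)$ whenever $u^2=0$'' only once per step of the induction on $r$, namely when it is applied to a product $(x_1^{n_r}tz)\,x_r$ whose second factor $x_r$ is one of $x_2,\dots,x_r$. Hence, if all of $x_2,\dots,x_r$ have index at most $n-1$, that same argument goes through using only $(\ast_{n-1})$ in place of the full hypothesis; that is, under the standing assumptions $\Nil^*(R)=0$ and K\"othe's conjecture one obtains $x_1^{n_1}x_2x_1^{n_2}\cdots x_rx_1^{n_r}=0$ whenever $x_1^{n}=0$, each of $x_2,\dots,x_r$ has index at most $n-1$, and $n_1+\cdots+n_r\ge n$. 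I would apply this with $x_1=y$ (so $x_1^n=0$), with $x_2=\cdots=x_n=x$ (each of index at most $2\le n-1$ since $n\ge 3$), with $r=n$ and $n_1=\cdots=n_n=1$: this yields $(yx)^{n-1}y=0$, and since $(xy)^n=x(yx)^{n-1}y$ it follows that $(xy)^n=0$, i.e.\ $xy\in\Nil(R)$. This establishes $(\ast_n)$, and an appeal to Lemma~\ref{glavnalema} completes the proof.

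The step I expect to be the main obstacle is precisely that Lemma~\ref{glavnalema} cannot be invoked as a black box: its hypothesis already demands that $xy$ be nilpotent for an \emph{arbitrary} nilpotent $y$, which is what we are trying to prove. The genuine content is therefore the observation that an index-bounded form of that hypothesis feeds an index-bounded form of its conclusion, so that the index becomes a legitimate induction parameter; the computation $(xy)^n=x(yx)^{n-1}y$, the fact that $ab$ is nilpotent if and only if $ba$ is, and the check that $(yx)^{n-1}y$ has the monomial shape treated by Lemma~\ref{glavnalema} are all routine.
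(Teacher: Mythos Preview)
Your proposal is correct, and it takes a genuinely different route from the paper's proof. The paper argues by a self-contained induction on $n$ with $y^{2^n}=0$, proving $(xy)^{2^{n-1}}x=0$: for the base case it shows $xyx=0$ via the nil-left-ideal argument, and for the inductive step it applies the induction hypothesis first to the pair $(x,y^2)$ to get $(xy^2)^{2^{n-2}}x=0$, deduces $(yxy)^{2^{n-1}}=0$, and then applies the induction hypothesis again to the pair $(x,yxy)$. Your approach instead opens up the proof of Lemma~\ref{glavnalema} and observes that its ring hypothesis is only ever invoked with second factor equal to one of $x_2,\dots,x_r$; this yields an index-bounded refinement of that lemma which you then apply with $x_1=y$ and $x_2=\cdots=x_n=x$. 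Your route gives the sharper conclusion $(xy)^n=0$ (versus the paper's $(xy)^{2^{\lceil\log_2 n\rceil-1}+1}=0$), at the cost of not being a black-box application of Lemma~\ref{glavnalema}. One small simplification: your induction on $n$ is in fact unnecessary, since in your application every $x_j$ for $j\ge 2$ already satisfies $x_j^2=0$, so the refined Lemma~\ref{glavnalema} runs on $(\ast_2)$ alone---which is exactly the standing hypothesis---and delivers $(yx)^{n-1}y=0$ for every $n$ directly.
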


\begin{proof}
According to Lemma \ref{glavnalema}, we only need to prove
that $xy\in\Nil(R)$ whenever $x,y\in\Nil(R)$ with $x^2=0$.
In fact, we will prove the following: if $x^2=0$ and $y^{2^n}=0$
for some $n\in\posintegers$, then $(xy)^{2^{n-1}}x=0$.

Let us prove the statement by induction on $n$.
First, let $n=1$, i.e.~$x^2=y^2=0$. Then $(xtx)^2=0$ for every $t\in R$,
hence $xtxy\in\Nil(R)$ by the hypothesis of the lemma.
Thus $txyx\in\Nil(R)$ and hence $xyx$ generates a nil left ideal in $R$.
Since $\Nil^*(R)=0$, it follows that $xyx=0$, as desired.

Now let $n\ge 2$, and suppose that $x^2=y^{2^n}=0$.
By the inductive hypothesis we have $(xy^2)^{2^{n-2}}x=0$, hence
$(yxy)^{2^{n-2}+1}=y(xy^2)^{2^{n-2}}xy=0$,
hence $(yxy)^{2^{n-1}}=0$, and hence $(xyxy)^{2^{n-2}}x=0$ by
the induction hypothesis, i.e.~$(xy)^{2^{n-1}}x=0$. This completes the proof.
\end{proof}

Now we are in a position to give the main theorem.

\begin{theorem}
\label{glavni}
Let $R$ be a ring with the set of nilpotents $\Nil(R)$.
The following are equivalent:
\begin{enumerate}
\item
$\Nil(R)$ is additively closed.
\item
$\Nil(R)$ is multiplicatively closed and $R$ satisfies
K\"othe's conjecture.
\item
$\Nil(R)$ is closed under $\krogec$.
\item
$\Nil(R)$ is a subring of $R$.
\end{enumerate}
\end{theorem}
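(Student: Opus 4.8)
The plan is to prove the two trivial implications $(iv)\Rightarrow(i)$ and $(iv)\Rightarrow(iii)$, together with $(i)\Rightarrow(ii)$, $(iii)\Rightarrow(ii)$ and $(ii)\Rightarrow(iv)$; these five implications make all four statements mutually equivalent. The trivial steps are immediate: a subring is closed under $+$, and since $x\krogec y=x+y-xy$ it is closed under $\krogec$ as well. For the remaining three implications, note that $\Nil(R)$ is always closed under negation, because $(-x)^k=\pm x^k$; hence to obtain $(ii)\Rightarrow(iv)$ it suffices to show that $\Nil(R)$ is additively closed, while for $(i)\Rightarrow(ii)$ and $(iii)\Rightarrow(ii)$ the only missing ingredient is multiplicative closure of $\Nil(R)$, since K\"othe's conjecture is provided by Lemma~\ref{kothe}.

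In each of these three nontrivial implications I would first reduce to the case $\Nil^*(R)=0$. As $\Nil^*(R)$ is a nil ideal, in $\bar R=R/\Nil^*(R)$ one has $\Nil(\bar R)=\Nil(R)/\Nil^*(R)$ and $\Nil^*(\bar R)=0$; if $R$ satisfies K\"othe's conjecture then $\bar R$ has no nonzero nil left ideals; closure of $\Nil$ under $+$, $\krogec$ or $\cdot$ is inherited by $\bar R$; and conversely, if $\Nil(\bar R)$ enjoys one of these closure properties then so does $\Nil(R)$, because an element of $R$ whose image in $\bar R$ is nilpotent has some power in $\Nil^*(R)$ and is therefore itself nilpotent. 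So, after relabelling, I may assume in all three implications that $\Nil^*(R)=0$ and that $R$ has no nonzero nil left ideals.

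Under this assumption $(i)\Rightarrow(ii)$ and $(iii)\Rightarrow(ii)$ are short: given $x,y\in\Nil(R)$ with $x^2=y^2=0$, either one of $x,y$ is zero, in which case $xy=0\in\Nil(R)$, or both have index $2$ and Lemma~\ref{redi2plus} (using $x+y\in\Nil(R)$), respectively Lemma~\ref{redi2krogec} (using $x\krogec y\in\Nil(R)$), gives $xy\in\Nil(R)$. The hypothesis of Lemma~\ref{glavnalema2} is thus verified, so $\Nil(R)$ is multiplicatively closed. For $(ii)\Rightarrow(iv)$ we are given that $\Nil(R)$ is multiplicatively closed, which in particular supplies the hypothesis of Lemma~\ref{glavnalema}; that lemma then guarantees $x^{n_1}z_2x^{n_2}z_3\cdots z_rx^{n_r}=0$ for all $z_i\in\Nil(R)$ whenever $x^n=0$ and $n_1+\cdots+n_r\ge n$. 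Now take $x,y\in\Nil(R)$ with $x^n=0=y^m$ and expand $(x+y)^N$ into its $2^N$ monomials in $x$ and $y$. A monomial that is a power of a single letter vanishes as soon as $N\ge n$ and $N\ge m$; any other monomial $w$ of length $N$ can be written $w=y^{c}x^{a_1}y^{b_1}x^{a_2}\cdots y^{b_{k-1}}x^{a_k}y^{d}$ with $c,d\ge0$, $k\ge1$ and $a_i,b_i\ge1$, and if $w$ contains at least $n$ copies of $x$ then $a_1+\cdots+a_k\ge n$, so Lemma~\ref{glavnalema} applied to the block $x^{a_1}y^{b_1}\cdots x^{a_k}$ (with $z_i=y^{b_{i-1}}$, this block being simply $x^{a_1}$ when $k=1$) shows $w=0$; symmetrically $w=0$ if $w$ contains at least $m$ copies of $y$. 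Since every monomial of length $N=n+m-1$ falls into one of these cases, $(x+y)^{n+m-1}=0$, so $\Nil(R)$ is additively closed, which — with closure under negation and the given multiplicative closure — makes it a subring.

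I expect the main obstacle to be $(ii)\Rightarrow(iv)$: the reduction modulo $\Nil^*(R)$ must be justified carefully, the point being that $\Nil^*(R)$ is an honest nil ideal, so that nilpotence can be tested modulo it and the quotient really does have no nonzero nil left ideals; and the combinatorics of the monomial decomposition — monomials that start or end with $y$, the degenerate case $k=1$ — must be handled cleanly, even though each individual verification is routine. The implications $(i)\Rightarrow(ii)$ and $(iii)\Rightarrow(ii)$, by contrast, are essentially immediate once Lemmas~\ref{redi2plus}, \ref{redi2krogec} and \ref{glavnalema2} are in hand.
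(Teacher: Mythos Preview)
Your proof is correct and follows essentially the same route as the paper: reduce modulo $\Nil^*(R)$, use Lemmas~\ref{redi2plus}/\ref{redi2krogec} together with Lemma~\ref{kothe} to feed into Lemma~\ref{glavnalema2} for the $+$ and $\krogec$ cases, and use Lemma~\ref{glavnalema} on the monomial expansion of a power of $x+y$ for the multiplicative case. The only cosmetic differences are that you arrange the implications as $(i),(iii)\Rightarrow(ii)\Rightarrow(iv)\Rightarrow(i),(iii)$ rather than proving each of $(i),(ii),(iii)\Rightarrow(iv)$ separately, and you use the sharper exponent $n+m-1$ (via pigeonhole on the total number of occurrences of $x$ and $y$) where the paper uses $mn$ (via pigeonhole on the length of the longest $y$-block).
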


\begin{proof}
(iv) $\Rightarrow$ (i), (ii), (iii) is trivial.

(i) $\Rightarrow$ (iv):
Let $R$ satisfy (i). Denote the factor ring $R'=R/\Nil^*(R)$.
Then $R'$ also satisfies (i) (indeed, $\Nil(R')$ is nothing but
the set of all $x+\Nil^*(R)$ with $x\in\Nil(R)$).
By Lemmas \ref{redi2plus} and \ref{kothe},
$R'$ satisfies the hypotheses of Lemma \ref{glavnalema2},
and hence $\Nil(R')$ is multiplicatively closed.
Hence $\Nil(R)$ is multiplicatively closed as well.

(ii) $\Rightarrow$ (iv):
Suppose that $R$ satisfies (ii).
Similarly as above, we see that $R'=R/\Nil^*(R)$ also satisfies
(ii) (note that $R'$ satisfies K\"othe's conjecture since $R$ does).
Take any $x,y\in\Nil(R')$, it suffices to see that $x+y\in\Nil(R')$.
Choose $m$ and $n$ such that $x^m=y^n=0$.
By expanding the expression $(x+y)^{mn}$,
we see that each term in this expression
either contains $y^n$ (and is therefore zero)
or contains $x$ at least $m$ times (and is therefore also zero
by Lemma \ref{glavnalema}). Thus $(x+y)^{mn}=0$. Hence $\Nil(R')$
is a subring of $R'$, which proves that $\Nil(R)$ is a subring of $R$.

(iii) $\Rightarrow$ (iv):
It suffices to prove (iii) $\Rightarrow$ (ii).
Let $R$ be a ring satisfying (iii).
Similarly as before, denote $R'=R/\Nil^*(R)$,
and observe that $R'$ also satisfies (iii).
By Lemmas \ref{redi2krogec} and \ref{kothe},
$R'$ satisfies the hypotheses of Lemma \ref{glavnalema2},
from which the conclusion is immediate.
\end{proof}

It is a natural question if the assumption `$R$ satisfies K\"othe's
conjecture' in (ii) of the above theorem is actually superfluous.
Although the assumption that $\Nil(R)$ is multiplicatively closed
seems to be quite restrictive,
we have not been able to prove that in that case, $R$ actually
satisfies K\"othe's conjecture.
Providing a counterexample to this problem might even be
more difficult since such an example would certainly settle
K\"othe's conjecture in the negative.

\begin{question}
\label{vprasanje1}
Let $R$ be a ring such that $\Nil(R)$ is multiplicatively closed.
Does $R$ satisfy K\"othe's conjecture?
\end{question}

\begin{remark}
To answer Question \ref{vprasanje1}, one may assume that
$R$ is a radical ring (i.e.~$R=J(R)$).
Indeed, if $R$ is a ring with $\Nil(R)$ multiplicatively closed,
and $I,J$ are two nil left ideals in $R$ such that $I+J$ is not nil,
then $I$ and $J$ are nil left ideals in $J(R)$,
which is a radical ring with $\Nil(J(R))$ multiplicatively closed.
\end{remark}

\begin{remark}
\label{glavnaopomba}
In Theorem \ref{glavni}, we could add some more equivalent statements.
For example, one such statement, equivalent to (i)--(iv) of Theorem \ref{glavni},
would be that $\Nil(R)$ is closed under
the operation $x\ast y=x+y+xy$. (In fact, this can be easily seen to be
equivalent to (iii) of Theorem \ref{glavni}.)
Moreover, another statement would be that $\Nil(R)$
is closed under the operation $(x,y)\mapsto xy+yx$,
or under the operation $(x,y)\mapsto[x,y]=xy-yx$,
and $R$ satisfies K\"othe's conjecture. (In order to prove this,
note that if $x,y\in\Nil(R)$ are of index $2$ and $xy+yx\in\Nil(R)$ (or $xy-yx\in\Nil(R)$),
then $xy\in\Nil(R)$. For the rest of the proof, use Lemma \ref{glavnalema2}.)
Similarly as above, we do not know if
the `K\"othe conjecture' assumption in these cases is superfluous or not.
\end{remark}

The most interesting part of Theorem \ref{glavni} for us will be
the equivalence (iii) $\Leftrightarrow$ (iv).
In the following we provide a few corollaries of that equivalence.
Following \cite{chunjeonkangleelee}, we call a ring a \textit{NR ring} if
its set of nilpotents forms a subring.

Note that in any ring $R$, $\Nil(R)$ is a subset of the group $(Q(R),\krogec)$
which is closed with respect to taking inverses.
Therefore, $\Nil(R)$ is a subgroup of $Q(R)$
if and only if it is closed under $\krogec$.

\begin{corollary}
Let $R$ be a ring. Then $R$ is a NR ring if and only if $\Nil(R)$
is a subgroup of $Q(R)$.\hfill$\qed$
\end{corollary}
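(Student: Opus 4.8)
The plan is to obtain the statement by composing two equivalences that are already available. The first is the observation recorded just above the corollary: since $0\in\Nil(R)$ and the $\krogec$-inverse of a nilpotent is again a nilpotent (it lies in the subring generated by that element), $\Nil(R)$ is a subset of the monoid $(Q(R),\krogec)$ that contains the identity and is closed under taking inverses; hence $\Nil(R)$ is a subgroup of $Q(R)$ if and only if it is closed under $\krogec$. The second is the equivalence (iii)~$\Leftrightarrow$~(iv) of Theorem~\ref{glavni}, which says that $\Nil(R)$ is closed under $\krogec$ if and only if $\Nil(R)$ is a subring of $R$, i.e.~if and only if $R$ is an NR ring.

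Chaining these together: $R$ is an NR ring if and only if $\Nil(R)$ is closed under $\krogec$, if and only if $\Nil(R)$ is a subgroup of $Q(R)$. This is exactly the desired conclusion, so the whole argument is two lines once Theorem~\ref{glavni} is in hand.

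There is essentially no obstacle here; the corollary is just a reformulation of part of the main theorem in the group-theoretic language of $Q(R)$. The only points worth a moment's thought are that $\Nil(R)\subseteq Q(R)$ in the first place and that $\Nil(R)$ is stable under $\krogec$-inversion, so that ``subgroup of $Q(R)$'' and ``closed under $\krogec$'' genuinely coincide for the particular subset $\Nil(R)$; both are immediate, the first already being noted in the introduction via the explicit inverse $-(x+x^2+\ldots+x^{n-1})$ of a nilpotent $x$ with $x^n=0$. I would therefore present the corollary as a direct consequence of Theorem~\ref{glavni} together with the preceding observation, without any further computation.
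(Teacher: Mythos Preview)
Your proposal is correct and follows exactly the approach the paper intends: the corollary is obtained by combining the observation just preceding it (that $\Nil(R)\subseteq Q(R)$ is always closed under $\krogec$-inverses, so being a subgroup is equivalent to closure under $\krogec$) with the equivalence (iii)~$\Leftrightarrow$~(iv) of Theorem~\ref{glavni}. This is precisely why the paper marks the corollary with a bare $\qed$.
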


In particular, when $\Nil(R)$ is the whole group $Q(R)$, we have:

\begin{corollary}
\label{torabimo}
Let $R$ be a ring such that $\Nil(R)=Q(R)$.
Then $R$ is NR.\hfill$\qed$
\end{corollary}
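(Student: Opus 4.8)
The plan is to reduce immediately to the preceding corollary. Since by hypothesis $\Nil(R)=Q(R)$, the set $\Nil(R)$ is literally the whole group $(Q(R),\krogec)$, hence in particular a subgroup of $Q(R)$. The corollary stated just above says that $R$ is a NR ring precisely when $\Nil(R)$ is a subgroup of $Q(R)$, so the conclusion follows at once.

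Alternatively, one can appeal directly to Theorem \ref{glavni}: because $(Q(R),\krogec)$ is a group (a monoid in which every element has a $\krogec$-inverse), it is in particular closed under $\krogec$; as $\Nil(R)=Q(R)$, this means $\Nil(R)$ is closed under $\krogec$, which is condition (iii) of Theorem \ref{glavni}, and condition (iv) then yields that $\Nil(R)$ is a subring of $R$. Either route is a one-line deduction from material already established.

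There is no genuine obstacle here, since all the work has been done in Lemmas \ref{redi2krogec}, \ref{kothe}, \ref{glavnalema2} and Theorem \ref{glavni}. The only point deserving a word of care is that the inclusion $\Nil(R)\subseteq Q(R)$ holds in every ring (as noted in the introduction), so the hypothesis $\Nil(R)=Q(R)$ is exactly the assertion that every quasi-invertible element is nilpotent; once that is granted, closure of $\Nil(R)$ under $\krogec$ is automatic and the rest is immediate.
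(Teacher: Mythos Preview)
Your proposal is correct and matches the paper's approach exactly: the paper states this corollary immediately after noting that $\Nil(R)=Q(R)$ makes $\Nil(R)$ the whole group $(Q(R),\krogec)$, hence a subgroup, so the previous corollary applies. Your alternative route through Theorem~\ref{glavni} is also valid and essentially equivalent, since the preceding corollary is itself a direct consequence of that theorem.
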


If $R$ is a unital ring, then saying that $\Nil(R)$ is closed under $\krogec$
is the same
as saying that the set $1+\Nil(R)=\{1+x|\ x\in\Nil(R)\}$
is closed under multiplication
(and thus forms a multiplicative subgroup of $U(R)$).
Thus, for unital rings, an equivalent form of (iii) $\Leftrightarrow$ (iv) of
Theorem \ref{glavni} might be:

\begin{corollary}
Let $R$ be a unital ring. Then $R$ is NR
if and only if $1+\Nil(R)$ is a multiplicative subgroup of $U(R)$.\hfill$\qed$
\end{corollary}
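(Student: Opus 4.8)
The plan is to transport the equivalence (iii) $\Leftrightarrow$ (iv) of Theorem \ref{glavni} across the bijection $\phi\colon R\to R$ given by $\phi(x)=1-x$. Since $R$ is unital, $\phi$ restricts to a bijection $Q(R)\to U(R)$ by the identity $U(R)=1-Q(R)$ recorded in the introduction, and a one-line computation gives $\phi(x)\phi(y)=(1-x)(1-y)=1-(x\krogec y)=\phi(x\krogec y)$. Thus $\phi$ is an isomorphism of monoids from $(R,\krogec)$ onto $(R,\cdot)$ carrying $0$ to $1$ and $(Q(R),\krogec)$ onto $(U(R),\cdot)$; this is the backbone of the whole argument.

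Next I would check that $\phi(\Nil(R))=1+\Nil(R)$. Indeed $\phi(\Nil(R))=1-\Nil(R)$, and $\Nil(R)=-\Nil(R)$ because the negative of a nilpotent is nilpotent, so $1-\Nil(R)=1+\Nil(R)$. Since $\phi$ is a group isomorphism $Q(R)\to U(R)$, it follows immediately that $\Nil(R)$ is a subgroup of $(Q(R),\krogec)$ if and only if $1+\Nil(R)$ is a subgroup of $(U(R),\cdot)$. Combining this with the first corollary of the section (equivalently, with (iii) $\Leftrightarrow$ (iv) of Theorem \ref{glavni} together with the observation that $\Nil(R)$ is a subgroup of $Q(R)$ exactly when it is closed under $\krogec$) already gives: $R$ is NR if and only if $1+\Nil(R)$ is a subgroup of $U(R)$.

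Finally, to match the stated formulation I would note that for the particular set $1+\Nil(R)$ the conditions ``subgroup of $U(R)$'' and ``closed under multiplication'' coincide: it contains $1=1+0$, and it is closed under inverses because $\Nil(R)$ is closed under $\krogec$-inverses (the $\krogec$-inverse of a nilpotent $x$ with $x^n=0$ is $-(x+x^2+\ldots+x^{n-1})$, again nilpotent, as noted before the first corollary) and $\phi$ carries $\krogec$-inverses to multiplicative inverses. Hence closure of $1+\Nil(R)$ under multiplication is equivalent, via Theorem \ref{glavni}, to $\Nil(R)$ being a subring, i.e.\ to $R$ being NR. There is essentially no obstacle here: the whole corollary is a translation of the already-proved Theorem \ref{glavni} through the monoid isomorphism $\phi$, and the only point requiring the small verification above is that ``closed under multiplication'' automatically upgrades to ``subgroup'' for the set $1+\Nil(R)$.
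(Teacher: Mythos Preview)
Your argument is correct and follows exactly the line the paper takes: the corollary is obtained from (iii) $\Leftrightarrow$ (iv) of Theorem \ref{glavni} by noting that closure of $\Nil(R)$ under $\krogec$ is equivalent, via the bijection $x\mapsto 1-x$ (using $-\Nil(R)=\Nil(R)$), to closure of $1+\Nil(R)$ under multiplication, which in turn automatically gives a subgroup of $U(R)$. The paper states this one-line translation just before the corollary and marks the result with a $\qed$; your write-up simply makes the monoid isomorphism $\phi$ and the ``closed under multiplication $\Rightarrow$ subgroup'' step explicit.
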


A unital ring $R$ is called a \textit{UU} ring if all units are unipotent, i.e.~$U(R)=1+\Nil(R)$.
These rings were studied in \cite{calugareanu2} and \cite{danchevlam}.

\begin{corollary}
Every UU unital ring is NR.\hfill$\qed$
\end{corollary}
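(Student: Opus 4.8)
The plan is to reduce the statement immediately to one of the corollaries already established, so essentially no new work is needed. Recall that a unital ring $R$ is UU precisely when $U(R)=1+\Nil(R)$, and recall from the preceding corollary that $R$ is NR if and only if $1+\Nil(R)$ is a multiplicative subgroup of $U(R)$. The first step, then, is simply to observe that for a UU ring the set $1+\Nil(R)$ coincides with the full unit group $U(R)$, which is of course a subgroup of itself; the cited corollary then gives that $R$ is NR, and there is nothing further to verify.

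Alternatively — and this is the same argument unwound one level — I would invoke Corollary \ref{torabimo}, which asserts that $\Nil(R)=Q(R)$ forces $R$ to be NR. For a unital ring one has $Q(R)=1-U(R)$, so the UU hypothesis $U(R)=1+\Nil(R)$ translates into $Q(R)=1-(1+\Nil(R))=-\Nil(R)$, and since $\Nil(R)$ is closed under negation this is exactly $\Nil(R)=Q(R)$. Corollary \ref{torabimo} then applies directly.

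There is no genuine obstacle here: the content is carried entirely by Theorem \ref{glavni} and its corollaries, in particular the equivalence (iii) $\Leftrightarrow$ (iv), and the only thing one must check is the harmless bookkeeping identity relating $1+\Nil(R)$, $U(R)$, and $Q(R)$ in a unital ring. The statement is best viewed as a direct illustration of the earlier machinery rather than as requiring an independent proof.
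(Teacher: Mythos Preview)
Your proposal is correct and matches the paper's approach exactly: the corollary is stated with a bare $\qed$, indicating it is immediate from the preceding corollary that $R$ is NR if and only if $1+\Nil(R)$ is a subgroup of $U(R)$, which is trivially the case when $U(R)=1+\Nil(R)$. Your alternative route via Corollary~\ref{torabimo} is also valid and equally short.
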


In \cite{chen2}, Chen called $R$ a \textit{NDG} ring if $\Nil(R)$
is additively closed, and proved that the polynomial ring
$R[x]$ is NDG if and only if $\Nil(R)[x]=\Nil(R[x])$. (While
this result is stated only for unital rings in \cite{chen2}, the proof also works
for general rings.) By Theorem \ref{glavni}, NDG rings are just NR rings.
Thus, we obtain the following interesting criterion when the polynomial ring is NR:

\begin{corollary}
Let $R$ be a ring. Then $R[x]$ is NR if and only if
$\Nil(R)[x]=\Nil(R[x]).\hfill\qed$
\end{corollary}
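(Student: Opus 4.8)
The plan is to obtain this as a direct consequence of Theorem \ref{glavni} together with Chen's characterisation of NDG polynomial rings quoted just above. By the equivalence (i) $\Leftrightarrow$ (iv) of Theorem \ref{glavni}, being NR is the same as being NDG, i.e.\ having an additively closed set of nilpotents; applying this to the ring $R[x]$ turns the statement ``$R[x]$ is NR'' into ``$R[x]$ is NDG'', which Chen's result identifies with $\Nil(R)[x]=\Nil(R[x])$. So once Theorem \ref{glavni} is in hand, nothing further is needed.

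If one wanted a self-contained argument instead, I would prove both implications directly. For ($\Rightarrow$): assume $R[x]$ is NR, so $\Nil(R[x])$ is closed under addition. Each monomial $ax^i$ with $a\in\Nil(R)$ is nilpotent in $R[x]$ (since $(ax^i)^k=a^kx^{ik}$), so additive closure gives at once $\Nil(R)[x]\subseteq\Nil(R[x])$. For the opposite inclusion I would take $f=a_0+a_1x+\ldots+a_mx^m$ with $f^k=0$, read off $a_m^k=0$ from the top-degree coefficient of $f^k$ (hence $a_m\in\Nil(R)$), subtract the nilpotent monomial $a_mx^m$, and induct on $\deg f$ to conclude that every $a_i$ lies in $\Nil(R)$, i.e.\ $f\in\Nil(R)[x]$.

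For ($\Leftarrow$): assuming $\Nil(R)[x]=\Nil(R[x])$, I would test it on polynomials of the form $a+bx$ with $a,b\in\Nil(R)$: these lie in $\Nil(R)[x]=\Nil(R[x])$, hence are nilpotent, and applying the evaluation homomorphism $R[x]\to R$, $x\mapsto 1$, shows that $a+b\in\Nil(R)$. Thus $\Nil(R)$ is additively closed, and then $\Nil(R)[x]=\Nil(R[x])$ is additively closed as well (the coefficients of a sum of two polynomials with nilpotent coefficients are again nilpotent). Hence $R[x]$ is NDG, and by Theorem \ref{glavni} it is NR.

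There is no genuine obstacle here; the only step carrying any weight is the reverse inclusion in ($\Rightarrow$), since for an arbitrary ring a nilpotent polynomial need not have nilpotent coefficients — that is precisely the content of Chen's theorem, and the degree induction above is the standard way to see it. Everything else is either trivial or an immediate appeal to Theorem \ref{glavni}.
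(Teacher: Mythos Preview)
Your first paragraph is exactly the paper's argument: the corollary is stated with a $\qed$ and deduced from the preceding paragraph, which combines Chen's characterisation of NDG polynomial rings with the identification NDG $=$ NR from Theorem~\ref{glavni}.

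Your additional self-contained argument is also correct and goes a step further than the paper, in that it reproves Chen's result rather than citing it. The degree induction for $\Nil(R[x])\subseteq\Nil(R)[x]$ works precisely because the NR hypothesis lets you subtract the nilpotent top monomial and stay inside $\Nil(R[x])$; and the evaluation map $\sum a_ix^i\mapsto\sum a_i$ is a ring homomorphism even without a unit, so your test on $a+bx$ is valid in the generality the paper works in. This direct route is slightly more informative, since it makes transparent exactly where the NR/NDG hypothesis enters, whereas the paper's proof is a pure citation.
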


In particular, if $R$ is a nil ring, then the above proposition says that
$R[x]$ is NR if and only if it is nil. It is known that the polynomial ring over a nil ring
need not be nil \cite{smoktunowicz}. Thus, we recover the observation
noted in \cite[Example 2.6]{chunjeonkangleelee}
that the polynomial ring over a NR ring need not be NR.

We close this section with a few limiting examples which show that the statements of
the lemmas applied in our main theorem cannot be much generalized.

In view of Lemmas \ref{redi2plus} and \ref{redi2krogec},
one might wonder if some kind of Theorem \ref{glavni}
would also hold elementwise. For example, one might ask,
if $x,y$ are nilpotents and $x+y$ (or $xy$ or $x\krogec y$) is a nilpotent,
is then any other among $x+y,xy,x\krogec y$ nilpotent? While Lemmas \ref{redi2plus}
and \ref{redi2krogec} say that this is actually the case if the indices
of the nilpotents are sufficiently low, it is not true in general.

\begin{example}
Let $F$ be a field and $R=M_3(F)$. Define the following elements in $R$:
$$x=\left[\begin{array}{ccc}0&1&0\\0&0&1\\0&0&0\end{array}\right],\quad
y=\left[\begin{array}{ccc}0&0&0\\1&0&0\\0&-1&0\end{array}\right],\quad
z=\left[\begin{array}{ccc}0&0&0\\0&0&0\\1&0&0\end{array}\right],\quad
w=\left[\begin{array}{ccc}1&1&0\\-1&-1&0\\0&1&0\end{array}\right].$$
Then $x,y,x+y\in\Nil(R)$ but $xy,x\krogec y\notin\Nil(R)$,
$x,z,xz\in\Nil(R)$ but $x+z,x\krogec z\notin\Nil(R)$, and
$x,w,x\krogec w\in\Nil(R)$ but $x+w,xw\notin\Nil(R)$.
\end{example}

Lemma \ref{glavnalema} raises the question if every ring $R$ in which
$\Nil(R)$ is a subring satisfies that,
whenever $x,y\in R$ are nilpotents and $x$ is of index $2$,
$xy$ is also of index at most $2$. Note that by Lemma \ref{glavnalema},
this is the case if $\Nil^*(R)=0$.
The following example shows that in general it is not true.

\begin{example}
Let $n\ge 3$ and let $T$ be the ring of strictly upper triangular $2n\times 2n$ matrices
over a field $F$.
Set $R=M_2(T)$.
Clearly, $R$ is a nil ring, i.e.~$\Nil(R)=R$. (In fact, $R$ is even nilpotent of index $2n$.)
Let $A\in T$ be the matrix with ones above the main diagonal and zeros elsewhere.
Then $A$ is of index $2n$. Letting
$x=\left[\begin{smallmatrix}0&A\\0&0\end{smallmatrix}\right]\in R$ and
$y=\left[\begin{smallmatrix}0&0\\A&0\end{smallmatrix}\right]\in R$, we see that
$x$ and $y$ are of index $2$, but
$xy=\left[\begin{smallmatrix}A^2&0\\0&0\end{smallmatrix}\right]$ is of index $n$.
\end{example}

\section{Exchange NR Rings}

In this section we study the NR condition for the class of exchange rings.
We begin with a proposition which relates the Abelian and NR conditions,
and holds for any ring, regardless of the exchange property.

Recall that a ring $R$ is \textit{Abelian} if idempotents in $R$ are central.
According to \cite{chunjeonkangleelee},
NR rings and Abelian rings in general are independent of each other.
But we have the following important fact:

\begin{proposition}
\label{abelovost}
Let $R$ be a NR ring. Then $R/\Nil^*(R)$ is Abelian.
\end{proposition}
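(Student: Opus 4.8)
The plan is to reduce immediately to the case $\Nil^*(R)=0$ by passing to $R'=R/\Nil^*(R)$: since $\Nil^*(R/\Nil^*(R))=0$ and $R'$ is again NR (its nilpotents are the images of nilpotents of $R$), it suffices to prove that a NR ring $R$ with $\Nil^*(R)=0$ is Abelian. So fix an idempotent $e\in R$ and an arbitrary $r\in R$; I must show $er=re$, equivalently $er-ere=0$ and $re-ere=0$, i.e. $e r(1-e)=0$ and $(1-e)re=0$ (working in the unital hull if needed).

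First I would record the standard observation that $a:=er(1-e)$ and $b:=(1-e)re$ both square to zero, hence lie in $\Nil(R)$, and that $e+a$, $e-a$, $e+b$, $e-b$ are idempotents. The key leverage is that $R$ is NR: by Lemma~\ref{kothe} (applied to either of the closure conditions, which hold since $\Nil(R)$ is a subring) $R$ satisfies K\"othe's conjecture, and with $\Nil^*(R)=0$ this means $R$ has \emph{no nonzero nil left ideals}. So the target reduces to exhibiting a nil left ideal containing $a$ (and similarly $b$). I would try to show that $Ra\subseteq\Nil(R)$: for $t\in R$, the element $ta=ter(1-e)$ satisfies $(ta)e=0$, so $(ta)^2=ta\cdot ta$ involves $ta\cdot e\cdot(\text{stuff})=0$ once one inserts the factor $e$ coming from $a=ea$; more carefully, $a=ea(1-e)$, so $ta\cdot ta=t\,ea(1-e)\,t\,ea(1-e)$, and the middle $(1-e)\,t\,e$ is exactly a nilpotent of square zero times things — here is where one uses that the product of the square-zero nilpotent $a$ with another nilpotent stays nilpotent. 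Concretely: $at(1-e)re$ type products, or better, one shows $a\Nil(R)\subseteq\Nil(R)$ and $xa$ is nilpotent for all $x$, then invokes the parenthetical remark in Lemma~\ref{glavnalema} that $Rx\subseteq\Nil(R)$ forces the left ideal generated by $x$ to be nil.

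The cleanest route I expect to take: show directly that for every $t\in R$ the element $at$ (or $ta$) is nilpotent. Since $a^2=0$, $\Nil(R)$ closed under multiplication gives $a\cdot(tat)$ nilpotent whenever $tat$ is; but more simply, $(at)(at)=a(ta)t$ and $ta$ need not vanish, so instead I use: $a$ and $tat\cdot a$-type manipulations, or I use that $e+a\in\Id(R)$ together with $e-tat$-adjusted idempotents. An alternative, and probably the actual argument, is to use Lemma~\ref{glavnalema} itself with $x_1=a$, $n=2$: for any $t$, $x_1 t x_1 = a t a = e r(1-e)\,t\,er(1-e)=0$ since the middle factor $(1-e)te$ is sandwiched and $a$ begins with $e$, ends with $(1-e)$ — wait, $ata = er(1-e)\,t\,er(1-e)$, and $(1-e)\cdot(ter)\cdot(1-e)$ need not be zero, but $a\cdot(\text{anything})\cdot a$ has the pattern $\ldots(1-e)\,?\,e\ldots$ which is generally nonzero; so this is not automatic.

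**Expected main obstacle.** The crux is precisely showing $a=er(1-e)$ generates a nil left ideal (then $a=0$, and symmetrically $b=0$, giving centrality). The NR hypothesis must be used in an essential, slightly clever way — not merely that $a^2=0$, but that products of $a$ with \emph{other} square-zero nilpotents (built from $e$ and $r$) remain nilpotent, forcing enough vanishing to conclude $a\in$ some nil left ideal. I anticipate the argument will consider the square-zero elements $a=er(1-e)$ and $c=(1-e)ra$ or $c=(1-e)(ra)e$, note $ac$ or $ca$ and their NR-consequences, and iterate as in Lemma~\ref{glavnalema2} until a nil left ideal pops out; then $\Nil^*(R)=0$ kills it. Pinning down exactly which auxiliary square-zero nilpotents to multiply, and in what order, is the delicate step; everything else (the reduction mod $\Nil^*(R)$, passing to the unital hull, the K\"othe/no-nil-left-ideals dictionary) is routine.
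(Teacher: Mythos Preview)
Your framework is exactly the paper's: pass to $R'=R/\Nil^*(R)$, note $R'$ is NR with $\Nil^*(R')=0$ (hence no nonzero nil one-sided ideals, via Lemma~\ref{kothe}), and show that the square-zero elements $a=er(1-e)$ and $b=(1-e)re$ each generate a nil one-sided ideal. But you have not found the step that makes this work, and your attempts are circling the wrong target: you keep trying to build auxiliary square-zero elements out of $e$ and $r$, when the trick is to build one out of $e$ and the \emph{arbitrary multiplier}.

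Concretely, to show $tb\in\Nil(R')$ for every $t\in R'$: the element $et(1-e)$ is square-zero \emph{for any} $t$. Since $R'$ is NR, the product
\[
et(1-e)\cdot (1-e)re \;=\; et(1-e)re \;=\; e\cdot tb
\]
is nilpotent; as $be=b$, a cyclic shift gives $tb\cdot e=tb$ nilpotent. Thus $R'b\subseteq\Nil(R')$, so $b=0$. The paper writes this as: $e(y-ye)$ and $(x-ex)e$ are both nilpotent of square zero, hence their product $e(y-ye)(x-ex)e$ is nilpotent, whence $(y-ye)(x-ex)e=y(xe-exe)$ is nilpotent for every $y$, forcing $xe-exe=0$; and symmetrically $ex-exe=0$.

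So the gap is precisely the one you flagged as the ``expected main obstacle'': you anticipated needing clever products of square-zero nilpotents, but looked for them among combinations of $a,b,e,r$. The missing idea is that the second square-zero factor should be $et(1-e)$ (or $(1-e)te$), absorbing the arbitrary $t$ into a Peirce off-diagonal corner. Once you see this, the proof is three lines; nothing like the iteration of Lemma~\ref{glavnalema2} is needed.
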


\begin{proof}
Let $R'=R/\Nil^*(R)$ and $e\in\Id(R')$, and take any $x,y\in R'$.
Since $R'$ is a NR ring and $(x-ex)e,e(y-ye)\in\Nil(R')$,
we have $e(y-ye)(x-ex)e\in\Nil(R')$,
hence $y(xe-exe)=(y-ye)(x-ex)e\in\Nil(R')$.
Since this holds for every $y\in R'$, it follows that $xe-exe$
generates a nil left ideal in $R'$. Since $R'$ contains no nonzero
nil left ideals, it follows that $xe-exe=0$.
Similarly we prove $ex-exe=0$, and hence $ex=xe$.
\end{proof}

\begin{remark}
In the above proposition, the ring $R$ in general is not Abelian.
For example, the ring of upper triangular matrices over a field is
NI and not Abelian.
\end{remark}

As noted in \cite{chunjeonkangleelee}, NR rings in general are not NI.
For example, if $R=F\langle x,y\rangle/(x^2)$
(i.e., $R$ is the ring of formal polynomials
in noncommuting variables $x$ and $y$
over a field $F$, modulo the ideal generated by $x^2$),
then $\Nil(R)=Fx+xRx$.
So $\Nil(R)$ is a subring of $R$ with the trivial multiplication, but $yx\notin\Nil(R)$,
and hence $\Nil(R)$ is not an ideal (see \cite{chunjeonkangleelee}).

Observe that, in this example, $J(R)=0$,
hence $\Nil(R)$ is not even contained in $J(R)$.
However, if $R$ is a NR exchange ring then always $\Nil(R)\subseteq J(R)$.
This follows from \cite[Corollary 2.17]{chen2}
(note that NR rings are linearly weak Armendariz,
see \cite{chen2} for details). Alternatively,
the proof could be obtained by using Proposition \ref{abelovost}.

It would be interesting to know if NR exchange rings are actually NI.

\begin{question}
\label{vprasanje2}
Let $R$ be an exchange ring.
If $R$ is NR, is then $R$ NI?
\end{question}

\begin{remark}
To answer Question \ref{vprasanje2}, we may assume that $R$ is a radical ring
(note that radical rings are exchange \cite{ara}).
Indeed, if $R$ is a NR exchange ring, then we know that $\Nil(R)\subseteq J(R)$,
so that $J(R)$ is a NR radical ring. Assume that $\Nil(R)$ is an ideal in $J(R)$.
Then, for any $a\in R$ and $x\in\Nil(R)$, $axa\in J(R)$, hence
$(ax)^2=(axa)x\in\Nil(R)$ and therefore $ax\in\Nil(R)$, which proves that $\Nil(R)$
is also an ideal in $R$, as desired.
\end{remark}

In the following we provide a partial answer to Question \ref{vprasanje2}
for the case when $R$ is of bounded index.

\begin{lemma}
\label{pomozna}
Let $R$ be a NR ring of bounded index $n$ with $\Nil^*(R)=0$.
If $x_1,\ldots,x_n\in\Nil(R)$ are of index $2$, then $x_1\ldots x_n=0$.
\end{lemma}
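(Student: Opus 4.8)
First I would record the standing facts. Since $R$ is NR, Theorem~\ref{glavni} gives that $\Nil(R)$ is a subring and that $R$ satisfies K\"othe's conjecture; combined with $\Nil^*(R)=0$ this means, as noted just before Lemma~\ref{glavnalema}, that $R$ contains no nonzero nil left or right ideal. Because each $x_i$ has index $2$, Lemma~\ref{glavnalema} (with $x_i$ as the ``special'' element and $n=2$) yields $x_i\Nil(R)x_i=0$ for every $i$; from this I would extract two bookkeeping principles: (a) a product of finitely many square-zero elements is again square-zero (if $u^2=v^2=0$ then $(uv)^2=u(vuv)=0$), and (b) any product of elements of $\Nil(R)$ in which some index-$2$ element occurs twice equals $0$ (sandwich the repeated letter around the product of nilpotents lying between its two occurrences). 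In particular $w:=x_1x_2\cdots x_n$ is square-zero.

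Next I would bring in the bounded-index hypothesis through $x_1+\cdots+x_n\in\Nil(R)$: since $(x_1+\cdots+x_n)^n=0$, expanding and discarding (by (b)) every monomial with a repeated index leaves the relation $\sum_{\sigma\in S_n}x_{\sigma(1)}x_{\sigma(2)}\cdots x_{\sigma(n)}=0$. For $n\le 2$ this already finishes: $n=1$ is vacuous, and for $n=2$ the relation reads $x_1x_2=-x_2x_1$, so for any $r\in R$ one gets $(x_2rx_1)^2=x_2r(x_1x_2)rx_1=-(x_2rx_2)(x_1rx_1)$, a product of square-zero elements, hence square-zero by (a); thus $x_2Rx_1\subseteq\Nil(R)$, so $x_1x_2R\subseteq\Nil(R)$ (since $x_1x_2r$ and $x_2rx_1$ are simultaneously nilpotent), whence $x_1x_2$ generates a nil right ideal and $x_1x_2=0$.

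For general $n$ the plan is an induction on $n$, the goal being to show that $w$ generates a nil one-sided ideal and then invoke the absence of nonzero nil one-sided ideals, exactly as in the proof of Lemma~\ref{glavnalema}. The inductive hypothesis should be brought to bear by passing to a quotient $R'$ of $R$ that still has trivial upper nilradical, is still NR, and has bounded index $n-1$, so that the images of the $x_i$ satisfy $x_1'\cdots x_n'=0$ there; one then has to descend this back to $R$ using the symmetric-sum relation above (and its analogues obtained from $(x_{i_1}+\cdots+x_{i_k})^k=0$ for subsets of indices) together with (a) and (b). Alternatively one might try to produce directly enough elements $t$ with $tw\in\Nil(R)$ by a combinatorial analysis of which monomials survive.

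The step I expect to be the main obstacle is precisely this descent. The symmetric-sum relation is ``too symmetric'' to exploit naively: left- or right-multiplying it by any product of the $x_i$ makes every monomial collapse through (b), so it cannot be used to isolate the single monomial $x_1\cdots x_n$ in an elementary way; and the naive attempt to prove $w$ generates a nil right ideal is circular, since for $r\in R$ the nilpotence of $wr$ is equivalent to that of $x_2\cdots x_n\,r\,x_1$, which in turn unwinds back to the nilpotence of $wr$. Breaking this circle is where the full strength of ``bounded index $n$'' has to be used, and making the inductive reduction to bounded index $n-1$ go through (in particular finding the right quotient, and checking it retains the NR and $\Nil^*=0$ hypotheses) is the delicate part of the argument.
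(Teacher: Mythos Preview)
You have correctly extracted the bookkeeping facts (a), (b) and the symmetric relation $\sum_{\sigma\in S_n}x_{\sigma(1)}\cdots x_{\sigma(n)}=0$, and you have correctly diagnosed that multiplying this relation by a product of the $x_i$'s kills every term. But the inductive ``pass to bounded index $n-1$'' plan is not a proof: you neither produce such a quotient nor explain how the conclusion would descend, and you yourself flag this as the obstacle. So as it stands there is a genuine gap.

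The missing idea is exactly the one you dismissed as circular, once you aim for $wtw=0$ rather than for $wr$ nilpotent. Fix $t\in R$ and left-multiply the symmetric relation by $x_1\cdots x_n\,t$. For each $\sigma\neq\id$ choose a descent $\sigma(i)>\sigma(i+1)$. In the word $x_1\cdots x_n\,t\,x_{\sigma(1)}\cdots x_{\sigma(n)}$ the letter $x_{\sigma(i)}$ occurs once in the left block and again at position $i$ of the right block; the segment between these two occurrences, call it $y$, begins and ends with $x_{\sigma(i)}$ and therefore satisfies $y^2=0$ regardless of the arbitrary $t$ inside. The letters between $x_{\sigma(i+1)}$ (in the left block) and the start of $y$ are products of the $x_j$'s, hence nilpotent, so $z:=(\text{those letters})\cdot y\in\Nil(R)$; now Lemma~\ref{glavnalema} gives $x_{\sigma(i+1)}\,z\,x_{\sigma(i+1)}=0$, which is precisely the factor sitting inside $x_1\cdots x_n\,t\,x_{\sigma(1)}\cdots x_{\sigma(n)}$. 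Thus every $\sigma\neq\id$ contributes $0$, and since the full sum is $0$ the surviving term $x_1\cdots x_n\,t\,x_1\cdots x_n=wtw$ is $0$ as well. Hence $(tw)^2=t(wtw)=0$ for all $t$, so $w$ generates a nil left ideal and $w=0$. The crucial point you overlooked is that inserting an arbitrary $t$ prevents (b) from killing \emph{all} terms, while the descent argument still kills all but the identity one.
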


\begin{proof}
By assumption we have $(x_1+\ldots+x_n)^n=0$,
i.e.
\begin{equation}
\label{enasigma}
\sum_{\sigma\in X_n}x_{\sigma(1)}\ldots x_{\sigma(n)}=0,
\end{equation}
where $X_n$ denotes the set of all functions
$\sigma:\{1,\ldots,n\}\to\{1,\ldots,n\}$.
By Lemma \ref{glavnalema} we have $x_iyx_i=0$ for each
$i$ and $y\in\Nil(R)$, thus $x_{\sigma(1)}\ldots x_{\sigma(n)}=0$
whenever $\sigma(i)=\sigma(j)$ for some $i\ne j$.
Hence (\ref{enasigma}) becomes
$$\sum_{\sigma\in S_n}x_{\sigma(1)}\ldots x_{\sigma(n)}=0,$$
where $S_n\subseteq X_n$ denotes the permutation group of $\{1,\ldots,n\}$.

Let $t\in R$.
For every $\sigma\in S_n\setminus\{\id\}$ there exists $i=i(\sigma)$ such that $\sigma(i)>\sigma(i+1)$. Denote
$$y=y(\sigma)=x_{\sigma(i)}x_{\sigma(i)+1}\ldots x_ntx_{\sigma(1)}x_{\sigma(2)}\ldots x_{\sigma(i)}.$$
Since $x_{\sigma(i)}^2=0$, $y\in\Nil(R)$, and hence
$z=z(\sigma)=x_{\sigma(i+1)+1}\ldots x_{\sigma(i)-1}y\in\Nil(R)$.
By Lemma \ref{glavnalema}, $x_{\sigma(i+1)}zx_{\sigma(i+1)}=0$, hence
$$x_1\ldots x_ntx_{\sigma(1)}\ldots x_{\sigma(n)}=x_1\ldots x_{\sigma(i+1)}zx_{\sigma(i+1)}x_{\sigma(i+2)}\ldots x_{\sigma(n)}=0.$$
Hence
$$x_1\ldots x_n t x_1\ldots x_n=\sum_{\sigma\in S_n}x_1\ldots x_ntx_{\sigma(1)}\ldots x_{\sigma(n)}=x_1\ldots x_nt\sum_{\sigma\in S_n}x_{\sigma(1)}\ldots x_{\sigma(n)}=0.$$
This shows that $x_1\ldots x_n$ generates a nil left ideal in $R$
and consequently $x_1\ldots x_n=0$.
\end{proof}

\begin{lemma}
\label{poseb}
Let $R$ be an exchange NR ring of bounded index with $\Nil^*(R)=0$.
Then $\Nil(R)=0$.
\end{lemma}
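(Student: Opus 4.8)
The plan is to argue by contradiction. Suppose $\Nil(R)\neq 0$; choosing $0\neq x\in\Nil(R)$ and the largest $j$ with $x^{j}\neq 0$, the element $b:=x^{j}$ is nonzero with $b^{2}=0$, so it is enough to exclude the existence of such a $b$. The hypotheses give: $R$ is Abelian by Proposition \ref{abelovost} (as $\Nil^{*}(R)=0$), and $R$ satisfies K\"othe's conjecture by Lemma \ref{kothe} (as $\Nil(R)$ is additively closed), so together with $\Nil^{*}(R)=0$ the ring $R$ has no nonzero nil one-sided ideals. Since $b\neq 0$, the left ideal $Rb$ is not nil --- otherwise $Rb=bR=0$, making $\integers b$ a nonzero nil ideal --- so I may fix $r\in R$ with $rb$ not nilpotent.

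Next I would exploit the bounded index. For any unit $v$ of a unitalization $\hat R$ of $R$, the conjugate $vbv^{-1}$ is again a square-zero element of $R$; and a product of $n$ square-zero elements vanishes by Lemma \ref{pomozna}, hence so does any longer product. Since
$$bv_{1}bv_{2}\cdots v_{m}b = b\,(v_{1}bv_{1}^{-1})\big(v_{1}v_{2}b(v_{1}v_{2})^{-1}\big)\cdots\big((v_{1}\cdots v_{m})\,b\,(v_{1}\cdots v_{m})^{-1}\big)(v_{1}\cdots v_{m})$$
displays the left-hand side as a product of $m+1$ square-zero elements times a unit, we get $bv_{1}bv_{2}\cdots v_{m}b=0$ whenever $m\ge n-1$. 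Consequently, if $\rho\in\hat R$ is any $\integers$-linear combination of units, then $\rho b$ is nilpotent: expanding $(\rho b)^{k}$ yields a $\integers$-combination of terms $v_{0}\cdot(bv_{1}b\cdots v_{k-1}b)$ with every $v_{i}$ a unit, and each such term is $0$ for $k\ge n$. The very same reasoning applies inside any corner $eRe$ of $R$, which is again NR, of bounded index $\le n$, with $\Nil^{*}(eRe)=0$.

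Now I would apply the exchange property to $r$, obtaining an idempotent $e\in R$ with $e\in Rr$ and $1-e\in\hat R(1-r)$; here $e$ is central because $R$ is Abelian. From $e\in Rr$ the image $ere$ of $r$ in the unital ring $eRe$ is left-invertible, hence --- bounded-index rings being Dedekind-finite --- a unit of $eRe$; symmetrically $(1-e)(1-r)(1-e)$ is a unit of $(1-e)\hat R(1-e)$, so the image of $r$ there equals $(1-e)-u$ for a unit $u$, a $\integers$-combination of units. Because $e$ is central, $\hat R=eRe\times(1-e)\hat R(1-e)$ as rings; the two components of $rb$ are $(ere)(ebe)$ and $\big((1-e)r(1-e)\big)\big((1-e)b(1-e)\big)$, and $rb$ is nilpotent if and only if both are. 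But the previous paragraph shows both components are nilpotent --- the first because $ere$ is a unit, the second because the image of $r$ is a $\integers$-combination of units --- contradicting the choice of $r$. Hence $\Nil(R)=0$.

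The part I expect to require the most care is making the corner argument rigorous: one must verify that the property ``no nonzero nil one-sided ideals'' --- equivalently $\Nil^{*}=0$, since $R$ (hence $eRe$) is NR --- descends to $eRe$, and that the Dedekind-finiteness of the bounded-index ring $eRe$ really upgrades one-sided invertibility of $ere$ to a two-sided inverse. There is also the minor nuisance that $R$ need not be unital, so that $e$ lies in $R$ while $1-e$ and the units used above live only in the unitalization $\hat R$; once these points and the identity $bv_{1}b\cdots v_{m}b=0$ are settled, the rest is formal.
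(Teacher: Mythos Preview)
Your argument is correct, and the core computational device---writing products $b v_1 b v_2\cdots v_m b$ as a product of conjugates $v^{i}bv^{-i}$ and then invoking Lemma~\ref{pomozna}---is exactly the trick the paper uses. The route to that trick, however, is genuinely different.

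The paper does not use the exchange property through idempotents at all. Instead it first observes that for exchange NR rings one has $\Nil(R)\subseteq J(R)$, and then replaces $R$ by $J(R)$: this is again NR, of bounded index, with $\Nil^*=0$, and it is \emph{radical}. In a radical ring every $q$ gives a unit $u=1-q$ of the unitalization, so the conjugate trick applied to $x_i=u^{i-1}xu^{1-i}$ immediately yields $(xu)^n=0$, hence $qx\in\Nil(R)$ for every $q$, hence $x=0$. That is the whole proof.

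By contrast, you keep the original $R$, apply the exchange property to produce a central idempotent $e$ (via Proposition~\ref{abelovost}), pass to the two Peirce corners, and use Dedekind-finiteness of bounded-index rings to upgrade one-sided inverses to units before running the conjugate trick in each corner. All of this works, but it costs you several extra verifications (Dedekind-finiteness, inheritance of NR/bounded index/$\Nil^*=0$ to corners and to $\hat R$, the nonunital bookkeeping you flag at the end). The paper's reduction to the radical case sidesteps every one of these: once $R$ is radical, there are no idempotents, no corners, and no one-sided invertibility issues---every element already yields a unit of the unitalization. So the two proofs share the same endgame but the paper's opening move is considerably more economical.
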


\begin{proof}
First note that we may assume that $R$ is radical.
In fact, if $R$ satisfies the hypotheses of the lemma,
then $J(R)$ is clearly a radical NR ring (with $\Nil(J(R))=\Nil(R)$) of bounded index.
To see that $\Nil^*(J(R))=0$, take a nil left ideal $I$ in $J(R)$.
Then the left ideal in $R$ generated by $I$, which is $I+RI$,
is also nil. Indeed, since $R$ is NR, it suffices to see that $ax\in\Nil(R)$ for
every $a\in R$ and $x\in I$. Now, $axa\in J(R)$, hence
$(ax)^2=(axa)x\in\Nil(R)$, which gives $ax\in\Nil(R)$, as desired.
Hence $J(R)$ indeed satisfies the hypotheses of the lemma,
and therefore we may assume that $R$ is a radical ring.

Let $x\in R$ such that $x^2=0$.
We need to prove that $x=0$.
It suffices to prove that $qx\in\Nil(R)$ for every $q\in R$.
Embed $R$ into a unital ring $S$, then $u=1-q\in U(S)$,
with $u^{-1}=1-r$, where $r\in R$ is taken such that $q\krogec r=r\krogec q=0$.
Let $n$ denote the index of $R$, and, for each $i=1,\ldots,n$, define
$x_i=u^{i-1}xu^{1-i}$.
Clearly, each $x_i$ is a nilpotent of index $2$. Moreover, $x_i\in R$
since it can be expressed in terms of $x,q,r$.
Hence by Lemma \ref{pomozna}, $x_1\ldots x_n=0$. But we have
$$x_1\ldots x_n=xuxu^{-1}u^2xu^{-2}\ldots u^{n-1}xu^{1-n}=(xu)^nu^{-n}.$$
Thus $(xu)^n=0$ and hence $xu\in\Nil(R)$, which gives $x-qx=ux\in\Nil(R)$.
Thus $qx\in\Nil(R)$, as desired.
\end{proof}

Now the following is easy:

\begin{theorem}
\label{exbounded}
Let $R$ be an exchange NR ring of bounded index.
Then $R$ is NI.
\end{theorem}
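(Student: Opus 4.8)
The plan is to deduce Theorem \ref{exbounded} from Lemma \ref{poseb} by the standard trick of passing to the quotient by the upper nilradical. Given an exchange NR ring $R$ of bounded index, set $R' = R/\Nil^*(R)$. First I would check that $R'$ inherits all three hypotheses of Lemma \ref{poseb}: it is still NR (since $\Nil(R')$ is exactly the image of $\Nil(R)$, a fact already used repeatedly in the proof of Theorem \ref{glavni}), it is still of bounded index (the same $n$ works, as nilpotents of $R'$ lift to nilpotents of $R$), and $\Nil^*(R') = 0$ by the general fact that $\Nil^*(R/\Nil^*(R)) = 0$. The exchange property also passes to quotients, so $R'$ is an exchange ring. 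Hence Lemma \ref{poseb} applies and gives $\Nil(R') = 0$.

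Next I would translate $\Nil(R') = 0$ back to $R$. This says precisely that every element of $R$ whose image in $R'$ is nilpotent already lies in $\Nil^*(R)$; equivalently, $\Nil(R) \subseteq \Nil^*(R)$. Combined with the always-true inclusion $\Nil^*(R) \subseteq \Nil(R)$, we get $\Nil(R) = \Nil^*(R)$, which is an ideal of $R$. Therefore $R$ is NI, as claimed.

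I do not expect any serious obstacle here; the theorem is essentially a repackaging of Lemma \ref{poseb}. The one point requiring a line of justification is that the exchange property descends to $R/\Nil^*(R)$ — but this is immediate from the definition of exchange ring used in the paper, since idempotents, the relation $e = ra$, and the relation $e = s \krogec a$ are all preserved under a surjective ring homomorphism. A second routine point is that bounded index passes to the quotient with the same bound $n$; this follows because any $\bar x \in \Nil(R')$ is the image of some $x \in R$, and then $x^m \in \Nil^*(R)$ for some $m$ forces $x \in \Nil(R)$, whence $x^n = 0$ and so $\bar x^{\,n} = 0$. With these observations in hand, the proof is a two-line invocation of Lemma \ref{poseb}.
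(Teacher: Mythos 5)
Your proposal is correct and follows exactly the paper's own argument: pass to $R'=R/\Nil^*(R)$, verify it inherits the exchange, NR, bounded-index, and $\Nil^*(R')=0$ hypotheses, apply Lemma \ref{poseb} to get $\Nil(R')=0$, and conclude $\Nil(R)=\Nil^*(R)$. The extra care you take with the routine verifications (exchange and bounded index descending to the quotient) is sound but not a different route.
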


\begin{proof}
Let $R'=R/\Nil^*(R)$.
Then $R'$ is an exchange NR ring of bounded index with $\Nil^*(R')=0$,
so that $\Nil(R')=0$ by Lemma \ref{poseb}.
It follows that $\Nil(R)=\Nil^*(R)$, as desired.
\end{proof}

\begin{remark}
The assumption that $R$ is exchange is crucial
in Theorem \ref{exbounded}.
For example, the ring $R=F\langle x,y\rangle/(x^2)$
is NR of bounded index $2$ (in fact, $\Nil(R)$ has trivial multiplication),
but it is not NI.
\end{remark}

We conclude the paper by providing an interesting application of Theorem \ref{glavni}
to the class of Diesl's \textit{strongly nil clean} rings.
These rings were defined by Diesl in \cite{diesl}, as rings in which
every element can be written as the sum of
an idempotent and a nilpotent that commute.
Along with these, Diesl also defined the wider class of \textit{nil clean} rings,
which have the same definition without commutativity.
It turns out that very little can be said about nil clean rings,
according to many fundamental open questions stated in \cite{diesl}.
On the other hand, much more can be said about strongly nil clean rings.
In fact, while probably not known to the author of \cite{diesl} at that time,
these rings had been completely characterized much earlier, back in 1988,
by Hirano et al.~\cite{hiranotominagayaqub}. Hirano et al.~proved that
strongly nil clean rings (of course, they were not called this way in
\cite{hiranotominagayaqub})
are precisely those rings $R$ in which the set of nilpotents $\Nil(R)$ forms
an ideal and $R/\Nil(R)$ is a Boolean ring. Accordingly,
these rings are just those which are Boolean modulo the upper nilradical.

In \cite{hiranotominagayaqub}, the authors used Jacobson's structure theory
for primitive rings to obtain their results.
Recently, several new proofs of the above equivalence appeared,
using different techniques (see \cite{nielsen,kosanwangzhou,danchevlam}).
In the following we provide another proof of that equivalence,
which, we believe, is noteworthy because it is very
short, self-contained, and it applies also to nonunital rings.

\begin{proposition}
\label{stronglynilclean}
Given a ring $R$, the following are equivalent:
\begin{enumerate}
\item
$R$ is strongly nil clean.
\item
For every $a\in R$, $a-a^2\in\Nil(R)$.
\item
$R/\Nil^*(R)$ is Boolean.
\end{enumerate}
\end{proposition}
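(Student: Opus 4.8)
The plan is to prove the cycle (i) $\Rightarrow$ (ii) $\Rightarrow$ (iii) $\Rightarrow$ (i), since (ii) $\Rightarrow$ (iii) is the only nontrivial implication and the other two are short. For (i) $\Rightarrow$ (ii): if $R$ is strongly nil clean and $a\in R$, write $a=e+q$ with $e\in\Id(R)$, $q\in\Nil(R)$, and $eq=qe$. Then $a-a^2=(e+q)-(e+q)^2=(e-e^2)+q-2eq-q^2=q-2eq-q^2$, which lies in $\Nil(R)$ because $q$, $eq$, and $q^2$ are all nilpotents that commute with each other (being products of the commuting pair $e,q$), so the NR property is not even needed here — these three elements generate a commutative nil subring, so their sum is nilpotent. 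Hence $a-a^2\in\Nil(R)$.

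For (iii) $\Rightarrow$ (i): suppose $R/\Nil^*(R)$ is Boolean. First, $R$ is an NR ring — in fact more: a Boolean ring has $\Nil=0$, and lifting shows $\Nil(R)=\Nil^*(R)$, so $\Nil(R)$ is an ideal and $R$ is NI, hence NR. Now take $a\in R$. Working modulo $\Nil^*(R)$, $a^2=a$, so $\bar a$ is idempotent. The task is to lift $\bar a$ to an idempotent $e\in R$ commuting with the nilpotent $a-e$; here one invokes that idempotents lift modulo a nil ideal, and in the Boolean (hence commutative modulo $\Nil^*(R)$) setting one can moreover choose the lift $e$ to be a polynomial in $a$ — the standard lifting $e = (3a^2-2a^3)^k$ or iterating $a \mapsto 3a^2-2a^3$ until it stabilizes produces an idempotent $e\in\integers[a]\subseteq$ (the subring generated by $a$), which automatically commutes with $a$ and therefore with $a-e$. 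Since $\bar e=\bar a$, we have $a-e\in\Nil^*(R)\subseteq\Nil(R)$, giving $a=e+(a-e)$ a strongly nil clean decomposition.

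The implication (ii) $\Rightarrow$ (iii) is the heart of the proposition and where Theorem \ref{glavni} enters. From (ii), every commutator-type element is controlled: for $a,b\in R$, $(a+b)-(a+b)^2\in\Nil(R)$ and $a-a^2,b-b^2\in\Nil(R)$, and subtracting gives $ab+ba\in\Nil(R)$ after rearranging. More usefully, for any $a\in R$ and any $e$ with $e-e^2\in\Nil(R)$ one derives that $ea-ae$ is nilpotent. The plan is: first show $\Nil(R)$ is additively closed. Given $x,y\in\Nil(R)$, apply (ii) to $a=x$: $x-x^2\in\Nil(R)$, but $x$ is already nilpotent, so this says nothing directly; instead, apply (ii) to a scaled/shifted element. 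Actually the cleanest route is to show that in a ring satisfying (ii), $\Nil(R)$ is closed under $\krogec$: for $x,y\in\Nil(R)$, one wants $x\krogec y=x+y-xy\in\Nil(R)$. One shows $x\krogec y$ satisfies $(x\krogec y)-(x\krogec y)^2$ relates to elements already known nilpotent, but more robustly, one checks that $1-(x\krogec y)$ (in a unitalization) is a product $(1-x)(1-y)$ of unipotents, and since (ii) forces a UU-like behavior on the relevant subring... The genuinely safe argument: from (ii), for every $a$, $a=a^2+n_a$ with $n_a\in\Nil(R)$, so iterating, $a-a^{2^k}=$ (sum of nilpotents built from $a$ in a commutative way) $\in\Nil(R)$ for all $k$; taking $k$ large enough that $a^{2^k}$ "stabilizes to an idempotent modulo nil" shows $a$ differs from an idempotent (namely a power-series limit in $\integers[a]$) by a nilpotent. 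This exhibits each $a$ as idempotent-plus-nilpotent in $\integers[a]$, already proving (i) — and (i) of this proposition is (i) of what we want — so in fact (ii) $\Rightarrow$ (i) directly, bypassing Theorem \ref{glavni} entirely; then (i) $\Rightarrow$ (iii) is the lifting-to-Boolean step: $\Nil(R)$ is a subring (by Theorem \ref{glavni}, since it is multiplicatively closed and Köthe holds), in fact an ideal once we know commutators of nilpotents with arbitrary elements are nil, and $R/\Nil^*(R)$ has $\bar a=\bar a^2$ for all $a$, i.e. is Boolean.

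I expect the main obstacle to be proving that $\Nil(R)$ is not merely a subring but an \emph{ideal} (equivalently that $\Nil(R)=\Nil^*(R)$) under hypothesis (ii) or (i). Theorem \ref{glavni} hands us that $\Nil(R)$ is a subring, but strongly nil clean requires more — the quotient by $\Nil^*(R)$ must be Boolean, which needs $\Nil(R)$ to coincide with the upper nilradical. The key step here is: from (ii), for $x\in\Nil(R)$ and arbitrary $a\in R$, show $ax\in\Nil(R)$. Apply (ii) to $a+x$ and to $a$, subtract, and use that $x,x^2,\ldots$ are nilpotent and that the cross terms $ax+xa$ etc. are controlled; combined with Lemma \ref{glavnalema}-type arguments ($x^2$ may be nonzero, so one first reduces modulo $\Nil^*(R)$ where one can assume $\Nil^*=0$, apply the lemma machinery, then lift back). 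Once $\Nil(R)$ is an ideal, $R/\Nil(R)=R/\Nil^*(R)$ satisfies $\bar a=\bar a^2$ with zero nilradical, hence is reduced and Boolean. So the implication structure I will actually write is (i) $\Leftrightarrow$ (ii) $\Leftrightarrow$ (iii) via: (i) $\Rightarrow$ (ii) [direct expansion, shown above]; (ii) $\Rightarrow$ (iii) [Theorem \ref{glavni} gives $\Nil(R)$ a subring; promote to ideal via the argument just described; quotient is Boolean]; (iii) $\Rightarrow$ (i) [idempotents lift modulo the nil ideal $\Nil^*(R)$, choosing the lift in $\integers[a]$ so it commutes with $a$].
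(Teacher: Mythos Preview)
Your treatment of (i) $\Rightarrow$ (ii) matches the paper, and your (iii) $\Rightarrow$ (i) via lifting the idempotent inside the subring generated by $a$ is fine in spirit (the paper uses the explicit lift $e=(1-(1-a)^n)^n$, which avoids any characteristic issues that the $3a^2-2a^3$ iteration might raise). The real problem is your (ii) $\Rightarrow$ (iii), where two genuine gaps appear.

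First, you invoke Theorem \ref{glavni} to conclude that $\Nil(R)$ is a subring, claiming it is ``multiplicatively closed and K\"othe holds,'' but you never verify either hypothesis from (ii). The paper's key observation, which you are missing, is that (ii) forces $\Nil(R)=Q(R)$: given $q\in Q(R)$ with $\krogec$-inverse $r$, one has $qr=rq$ and $q=(q-q^2)-(q-q^2)r$, which is nilpotent since $q-q^2\in\Nil(R)$ commutes with $r$. Once $\Nil(R)=Q(R)$, closure under $\krogec$ is automatic (since $(Q(R),\krogec)$ is a group), and Theorem \ref{glavni}(iii)$\Leftrightarrow$(iv) applies immediately. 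Without this step your appeal to Theorem \ref{glavni} is unsupported.

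Second, your plan to promote the subring $\Nil(R)$ to an ideal by subtracting $(a+x)-(a+x)^2$ from $a-a^2$ only yields $ax+xa\in\Nil(R)$, not $ax\in\Nil(R)$, and separating these is not straightforward; moreover the subtraction itself already presupposes that $\Nil(R)$ is additively closed. The paper instead passes to $R'=R/\Nil^*(R)$, uses Proposition \ref{abelovost} to get that $R'$ is Abelian, and then argues: for $x\in\Nil(R')$ and $a=e+q\in R'$ (strongly nil clean decomposition), $ax=ex+qx$ with $ex$ nilpotent because $e$ is central and $qx$ nilpotent because $\Nil(R')$ is a subring, so $ax\in\Nil(R')$. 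This gives $\Nil(R')=0$ and hence $\Nil(R)=\Nil^*(R)$. You should route your argument through Proposition \ref{abelovost} rather than the ad hoc subtraction.
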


\begin{proof}
(iii) $\Rightarrow$ (ii) is trivial.

(i) $\Leftrightarrow$ (ii):
First, let $a\in R$ be strongly nil clean, i.e.~$a=e+q$ and $eq=qe$,
where $e\in\Id(R)$ and $q\in\Nil(R)$. Then
$a-a^2=e+q-e^2-2eq-q^2=q-2eq-q^2$,
which is clearly a nilpotent since $e$ and $q$ commute.
Conversely, if $a\in R$ is such that $(a-a^2)^n=0$, then,
embedding $R$ into a unital ring $S$ and setting $e=(1-(1-a)^n)^n$,
one easily checks that $e$ is a multiple of $a^n$ and $1-e$ is a multiple of $(1-a)^n$, so that
$e(1-e)$ and $(a-e)^n=a^n-a^ne+(ae-e)^n=a^n(1-e)+(a-1)^ne$ are both multiples of $a^n(1-a)^n=0$
and thus $e-e^2=e(1-e)=0$ and $(a-e)^n=0$.
Note that also $e\in R$ and $ae=ea$. Hence $a=e+(a-e)$ is a strongly nil clean decomposition of $a$ in $R$.

(i) and (ii) $\Rightarrow$ (iii):
This is the main part of the proof.
First, observe that every ring $R$ satisfying (ii) has $\Nil(R)=Q(R)$.
Indeed, given any $q\in Q(R)$, then,
taking $r\in R$ with $q\krogec r=r\krogec q=0$,
we easily see that $q=(q-q^2)-(q-q^2)r$,
so that $q$ must be a nilpotent since $q$ and $r$ commute and $q-q^2\in\Nil(R)$.
Now, by Corollary \ref{torabimo} $R$ is a NR ring, so that
$R'=R/\Nil^*(R)$ is NR and Abelian by Proposition \ref{abelovost}.
Accordingly, since every element in $R'$ can be expressed as the sum of
an idempotent and a nilpotent, $\Nil(R')$ forms an ideal in $R'$,
so that $\Nil(R')=0$. Hence $\Nil(R)=\Nil^*(R)$,
which immediately yields (iii).
\end{proof}

\begin{remark}
From the above proof we see that the equivalence (i) $\Leftrightarrow$ (ii)
holds also on the elementwise level, that is,
an element $a$ in a ring is strongly nil clean
(i.e., the sum of an idempotent and a nilpotent)
if and only if $a-a^2$ is a nilpotent. This shows that the condition
``$a$ is strongly clean'' in \cite[Theorem 2.1]{kosanwangzhou}
is actually superfluous, and provides an easy self-evident argument for
\cite[Theorem 2.9]{kosanwangzhou}.
\end{remark}

\section*{Acknowledgements}

The author would like to thank Nik Stopar for helpful discussions and
Pace P.~Nielsen for useful comments on the previous version of this paper.

\bibliographystyle{abbrv}
\bibliography{RIWNFS}

\begin{thebibliography}{10}

\bibitem{antoine}
R.~Antoine.
\newblock Nilpotent elements and {A}rmendariz rings.
\newblock {\em J. Algebra}, 319(8):3128--3140, 2008.

\bibitem{ara}
P.~Ara.
\newblock Extensions of exchange rings.
\newblock {\em J. Algebra}, 197(2):409--423, 1997.

\bibitem{calugareanu2}
G.~C{\u{a}}lug{\u{a}}reanu.
\newblock {U}{U} rings.
\newblock {\em Carpathian J. Math.}, 31(2):157--163, 2015.

\bibitem{chen2}
W.~Chen.
\newblock On linearly weak {A}rmendariz rings.
\newblock {\em J. Pure Appl. Algebra}, 219(4):1122--1130, 2015.

\bibitem{chunjeonkangleelee}
Y.~Chun, Y.~C. Jeon, S.~Kang, K.~N. Lee, and Y.~Lee.
\newblock A concept unifying the {A}rmendariz and {NI} conditions.
\newblock {\em Bull. Korean Math. Soc.}, 48(1):115--127, 2011.

\bibitem{danchevlam}
P.~V. Danchev and T.~Y. Lam.
\newblock Rings with unipotent units.
\newblock Preprint.

\bibitem{diesl}
A.~J. Diesl.
\newblock Nil clean rings.
\newblock {\em J. Algebra}, 383:197--211, 2013.

\bibitem{hiranotominagayaqub}
Y.~Hirano, H.~Tominaga, and A.~Yaqub.
\newblock On rings in which every element is uniquely expressible as a sum of a
  nilpotent element and a certain potent element.
\newblock {\em Math. J. Okayama Univ.}, 30:33--40, 1988.

\bibitem{kosanwangzhou}
T.~Ko{\c{s}}an, Z.~Wang, and Y.~Zhou.
\newblock Nil-clean and strongly nil-clean rings.
\newblock {\em J. Pure Appl. Algebra}, 220(2):633--646, 2016.

\bibitem{lam2}
T.~Y. Lam.
\newblock {\em A first course in noncommutative rings}, volume 131 of {\em
  Graduate Texts in Mathematics}.
\newblock Springer-Verlag, New York, second edition, 2001.

\bibitem{nielsen}
P.~P. Nielsen.
\newblock Zentralblatt review \texttt{https://zbmath.org/?q=an:1296.16016}.

\bibitem{smoktunowicz}
A.~Smoktunowicz.
\newblock Polynomial rings over nil rings need not be nil.
\newblock {\em J. Algebra}, 233(2):427--436, 2000.

\end{thebibliography}

\end{document}